\documentclass{amsart}
\usepackage{amsfonts,amssymb}
\usepackage{enumerate,graphicx}
\usepackage{caption}
\usepackage{subcaption}
\usepackage[square,numbers]{natbib}
\usepackage{mathrsfs,bbm}
\usepackage{tikz,tikzscale,tikz-cd}
\usepackage{multirow,xparse,fp}
\usepackage{environ}
\usepackage{amstext}
\usepackage{hyperref}
\usepackage{diagbox}
 \usepackage[pagewise]{lineno}
\usepackage{comment}
\setcounter{MaxMatrixCols}{10}
\newtheorem{theorem}{Theorem}[section]

\theoremstyle{definition}

\newtheorem{remark}[theorem]{Remark}

\allowdisplaybreaks[4]

\begin{document}
\title{The entropy structures of axial products on $\mathbb{N}^d$ and Trees}

\author[Jung-Chao Ban]{Jung-Chao Ban}
\address[Jung-Chao Ban]{Department of Mathematical Sciences, National Chengchi University, Taipei 11605, Taiwan, ROC.}
\address{Math. Division, National Center for Theoretical Science, National Taiwan University, Taipei 10617, Taiwan. ROC.}
\email{jcban@nccu.edu.tw}

\author[Wen-Guei Hu]{Wen-Guei Hu}
\address[Wen-Guei Hu]{College of Mathematics, Sichuan University, Chengdu, 610064, China}
\email{wghu@scu.edu.cn}

\author[Guan-Yu Lai]{Guan-Yu Lai}
\address[Guan-Yu Lai]{Department of Mathematical Sciences, National Chengchi University, Taipei 11605, Taiwan, ROC.}
\email{gylai@nccu.edu.tw}

\keywords{Full axial extension, entropy, multiplicative integer systems}

\thanks{Ban is partially supported by the Ministry of Science and Technology, ROC (Contract MOST 111-2115-M-004-005-MY3). Hu is partially supported by the National Natural Science Foundation of China (Grant 11601355). Lai is partially supported by the Ministry of Science and Technology, ROC (Contract MOST 111-2811-M-004-002-MY2).}


\baselineskip=1.2\baselineskip

\begin{abstract}
 In this paper, we first concentrate on the possible values and dense property of entropies for isotropic and anisotropic axial products of subshifts of finite type (SFTs) on $\mathbb{N}^d$ and $d$-tree $\mathcal{T}_d$. We prove that the entropies of isotropic and anisotropic axial products of SFTs on $\mathbb{N}^d$ are dense in $[0,\infty)$, and the same result also holds for anisotropic axial products of SFTs on $\mathcal{T}_d$. However, the result is no longer true for isotropic axial products of SFTs on $\mathcal{T}_d$. Next, motivated by the work of Johnson, Kass and Madden \cite{johnson2007projectional}, and Schraudner \cite{schraudner2010projectional}, we establish the entropy formula and structures for full axial extension shifts on $\mathbb{N}^d$ and $\mathcal{T}_d$. Combining the aforementioned results with the findings on the surface entropy for multiplicative integer systems \cite{ban2022boundary} on $\mathbb{N}^d$ enables us to estimate the surface entropy for the full axial extension shifts on $\mathcal{T}_d$. Finally, we extend the results of full axial extension shifts on $\mathcal{T}_d$ to general trees. 
\end{abstract}
\maketitle

\tableofcontents
\section{Introduction}

Let $\mathcal{A}$ be a finite set and $X_{1},\ldots
,X_{d}\subseteq \mathcal{A}^{\mathbb{N}}$ be $d$ shifts, the associated \emph{axial
	product of subshifts }$X_{1},\ldots ,X_{d}$\emph{\ on }$\mathbb{N}^{d}$,
write $\otimes _{i=1}^{d}X_{i}=X_{1}\otimes \cdots \otimes X_{d}\subset 
\mathcal{A}^{\mathbb{N}^{d}}$ is defined as follows. 
\[
\otimes _{i=1}^{d}X_{i}=\{x\in \mathcal{A}^{\mathbb{N}^{d}}:\forall g\in 
\mathbb{N}^{d}\text{ }\forall i\in \{1,\ldots ,d\}\text{, }x_{g+\mathbb{Z}%
	e_{i}}\in X_{i}\}\text{,} 
\]%
where $x_{g+\mathbb{Z}e_{i}}\in \mathcal{A}^{\mathbb{N}}$ is the sequence
obtained by shifting $x$ by $g$ and $\{e_{1},\ldots ,e_{d}\}$ denotes the standard basis of $\mathbb{N}^{d}$. Suppose $\mathcal{T}_{d}$ is a conventional $d$%
-tree, that is, $\mathcal{T}_{d}$ is a free semigroup generated by $\Sigma
=\{f_{1},\ldots ,f_{d}\}$ with the root, say $\epsilon $. The \emph{axial product of subshifts }$X_{1},\ldots ,X_{d}\subseteq \mathcal{%
	A}^{\mathbb{N}}$\emph{\ on }$\mathcal{T}_{d}$, write $\times
_{i=1}^{d}X_{i}=X_{1}\times \cdots \times X_{d}$ is defined similarly. That
is, 
\[
\times _{i=1}^{d}X_{i}=\{x\in \mathcal{A}^{\mathcal{T}_{d}}:\forall g\in 
\mathcal{T}_{d}\text{ }\forall i\in \{1,\ldots ,d\}\text{, }x_{g+\mathbb{Z}%
	f_{i}}\in X_{i}\}\text{.}
\]%
An axial product $\otimes _{i=1}^{d}X_{i}$ (or $\times _{i=1}^{d}X_{i}$) is
called \emph{isotropic} if $X_{i}=X_{j}$ $\forall 1\leq i\neq j\leq d$, and
is called \emph{anisotropic }if it is not isotropic\footnote{%
	An isotropic axial product space is also called a \emph{hom-shifts} on $\mathbb{N}^{d}
	$ \cite{chandgotia2016mixing} or on $\mathcal{T}_{d}$ \cite{petersen2020entropy, petersen2021asymptotic}}. 

The isotropic axial product of
shifts on $\mathbb{N}^{d}$ is introduced in \cite{louidor2013inde}. Many important physical
systems, e.g., the hard square model on $\mathbb{N}^{2}$ (or $\mathbb{Z}^{2}$%
), are characterized by this kind of multidimensional shift. In \cite{chandgotia2016mixing},
the authors study the decidability for some topological properties of $%
\otimes _{i=1}^{d}X_{i}$ for $d\geq 2$. In \cite{louidor2013inde,meyerovitch2014independence}, the authors study the relation between
the limiting and independence entropy of $\otimes _{i=1}^{d}X_{i}$ as $%
d\rightarrow \infty $.

The axial product space on $\mathcal{T}_{d}$ is a sort of tree-shift (cf. 
\cite{ban2017mixing, ban2017tree, PS-2017complexity, petersen2020entropy,
	petersen2021asymptotic, aubrun2012tree}), and it is attracting a lot of attention
recently since $\mathcal{T}_{d}$ is not an amenable group and the shifts
defined on it exhibit very rich and different phenomena in the topological
(cf. \cite{ban2017mixing}) and statistical prospects (cf. \cite%
{ban2021structure, ban2022topological, petersen2021asymptotic}). In \cite{petersen2021asymptotic}, the authors extend the concept of limiting entropy \cite{meyerovitch2009growth, meyerovitch2014independence} on $\mathbb{N}^d$ to asymptotic pressure on $\mathcal{T}_d$ and study its limiting behavior. The aim of this paper is to
investigate the entropy structures of $\otimes _{i=1}^{d}X_{i}$ and $\times
_{i=1}^{d}X_{i}$, and we introduce the formal definition for the topological
entropy below.

Let $F\subseteq \mathbb{N}^{d}$ be a finite set, we denote by $\mathcal{P%
}(F,X):\mathcal{A}^{\mathbb{N}^{d}}\rightarrow \mathcal{A}^{F}$ the \emph{%
	canonical projection }of $X\subseteq \mathcal{A}^{\mathbb{N}^{d}}$ into $%
\mathcal{A}^{F}$, i.e., $\mathcal{P}(F,X)=\{(x_{g})_{g\in F}\in \mathcal{A}%
^{F}:x\in X\}$. Denote $F_{n}:=[1,n]^{d}$ and for a subshift $X\subset \mathcal{A}^{%
	\mathbb{N}^{d}}$, the \emph{entropy }of $X$ is defined as 
\begin{equation}
h(X)=\lim_{n\rightarrow \infty }\frac{\log \mathcal{P}(F_{n},X)}{%
	\left\vert F_{n}\right\vert }\text{,}  \label{1}
\end{equation}%
where $\left\vert F\right\vert $ denotes the number of the elements in $F$.
For a subshift $X\subset \mathcal{A}^{\mathcal{T}_{d}}$, let 
\[
\Delta _{n}=\{g\in \mathcal{T}_{d}:\left\vert g\right\vert \leq n\}\text{
	and }\mathcal{T}_{n}=\{g\in \mathcal{T}_{d}:\left\vert g\right\vert =n\}%
\text{,}
\]%
then the \emph{topological entropy} of $X\subseteq \mathcal{A}^{\mathcal{T}%
	_{d}}$ is defined similarly. 
\begin{equation}
h^{\mathcal{T}}(X)=\lim_{n\rightarrow \infty }\frac{\log \mathcal{P}(\Delta
	_{n},X)}{\left\vert \Delta _{n}\right\vert }\text{.}  \label{2}
\end{equation}%
The primary objective of this paper is to study the entropy structures of $%
h(\otimes _{i=1}^{d}X_{i})$ and $h^{\mathcal{T}}(\times _{i=1}^{d}X_{i})$.
The limit (\ref{1}) exists since $\mathbb{N}^{d}$ is an amenable group (cf. 
\cite{ceccherini2010cellular}). That is, if $F_{n}$ is a \emph{F\o lner sequence}, i.e., $\lim_{n \rightarrow \infty }\frac{\left\vert gF_{n}\Delta	F_{n }\right\vert }{\left\vert F_{n }\right\vert }=0$, then the limit 
\[
h(X)=\lim_{n \rightarrow \infty }\frac{\log \mathcal{P}(F_{n },X)}{%
	\left\vert F_{n}\right\vert }
\]%
exists and is equal to (\ref{1}). The existence of the limit (\ref{2}) is
proved in \cite{PS-2017complexity}\footnote{%
	Since the subadditive property does not hold true for shifts on $\mathcal{T}%
	_{d}$, the proof of the existence of (\ref{2}) is quite different than that in the
	cases where shifts on $\mathbb{N}^{d}$. We refer the reader to \cite{ban2021stem} for
	the existence of the limit (\ref{2}) for shifts defined in a large class of
	trees.}, and it appears that the structures of $\{h(X):X$ is a
tree-SFT$\}$\footnote{For the simplicity of notations, we denote $h^\mathcal{T}$ by $h$.} and $\{h(X):X$ is an $\mathbb{N}^{d}$ SFT$\}$ are quite
different (cf. \cite{ban2021structure}). This motivates a systematic study
on the sets 
\[
\{h(\otimes _{i=1}^{d}X_{i}):X_{i}\subseteq \mathcal{A}^{\mathbb{N}}\text{ }%
\forall i=1,\ldots ,d\}\text{ and }\{h(\times
_{i=1}^{d}X_{i}):X_{i}\subseteq \mathcal{A}^{\mathbb{N}}\text{ }\forall
i=1,\ldots ,d\}\text{.}
\]%

The study consists of four components, and we present the motivations and
findings below.

1). The entropy structure of the axial product on $\mathbb{N}^{2}$ and $%
\mathcal{T}_{2}$. It is known that the set $\{h(X):X\in \mathcal{A}^{\mathbb{%
		Z}}$ is an mixing SFT$\}$ is the logarithm of the numbers in the spectral
radii of aperiodic non-negative integral matrices \cite{Lind-ETaDS1984}, and
the set $\{h(X):X\in \mathcal{A}^{\mathbb{Z}^{d}}$ is an SFT$\}$, $%
d\geq 2,$ is the class of non-negative right recursively enumerable numbers 
\cite{HM-AoM2010a}. The above results focus on all possible values of the
set of entropies of SFTs on $\mathbb{N}^{d}$, $d\geq 1$ in algebraic and
computational perspectives. Besides, Desai \cite{desai2006subsystem} proved
that any $\mathbb{Z}^{d}$ SFT (resp. sofic) $X$ with $h(X)>0$ contains a
family of $\mathbb{Z}^{d}$ subSFTs (subsofics) with entropies dense in the
interval $[0,h(X)]$. This indicates that the possible entropies of $\mathbb{Z%
}^{d}$ SFTs (or sofic) dense in $[0,\infty )$.

In this paper, we concentrate on the possible values and dense property of entropies for
isotropic and anisotropic axial products of SFTs on $\mathbb{N}_{d}$ and $%
\mathcal{T}_{d}$. For simplicity, we consider the cases where $d=2$, and the
case in which $d\geq 2$ can be dealt with in the same manner. Theorem \ref{thm 2.1} demonstrates that the set of entropy of anisotropic (or isotropic) axial products of subshifts of finite type (SFTs) on $\mathbb{N}%
^{2}$ is dense in $[0,\infty )$. The same result holds true for the
anisotropic axial products of SFTs on $\mathcal{T}_{2}$\emph{. }However,
this outcome no longer holds true for isotropic axial products of SFTs on $%
\mathcal{T}_{2}$, Theorem \ref{thm 2.1} reveals that the closure of possible entropy
values of isotropic axial products of SFTs cannot intersect the interval $(0,%
\frac{\log 2}{2})$. This phenomenon is new and differs from the case of the
axial products on $\mathbb{N}^{2}$. The possible reason lies in the inherent
difference in the structure of the $\mathbb{N}^{2}$ and $\mathcal{T}_{2}$
and the isotropic constrain.

2). Whether transitivity implies the positivity of the entropy. Another
viewpoint comes from \cite{kolyada1997some}, in Section 9 of \cite%
{kolyada1997some}, the authors describes the following:

\begin{quote}
	The question whether transitivity implies the positivity of the entropy is
	challenging. Moreover, if the answer is affirmative, one can ask what is the
	best lower bound for the entropy of transitive maps in the space under  consideration.
\end{quote}

Indeed, there are spaces in which transitive maps have zero topological
entropy, and there is also a class of transitive maps defined in the interval
or circle in which the infimum of the entropies of the class is positive
(see \cite{kolyada1997some} and the references given therein). Thanks to the
aforementioned result, Theorem \ref{thm inf} reveals that the infimum of the entropies
of the set $\{\times _{i=1}^{d}X_{i}:\times _{i=1}^{d}X_{i}$ is transitive
and has a periodic point$\}$ (or $\{\otimes _{i=1}^{d}X_{i}:\otimes
_{i=1}^{d}X_{i}$ is transitive and has a periodic point$\}$) is zero.

3). The entropy structure of full axial extension shifts on $\mathbb{N}^{d}$
and $\mathcal{T}_{d}$. Let $E=\mathcal{A}^{\mathbb{N}}$, that is, the full
shift. It is not difficult to show that the entropy of the axial product of $E$
with a shift $X$ on $\mathbb{N}^2$ equals the entropy of $X$, i.e., $h(E\otimes X)=h(X)$. The
converse also holds true when the underlined lattice is $\mathbb{Z}^{2}$
(Theorem 4.1. \cite{johnson2007projectional}) or $\mathbb{Z}^{d}$ in which $X$ possesses some mixing assumption (Theorem
2.3. \cite{schraudner2010projectional}). However, the property is not generally
true for the axial product spaces defined in $\mathcal{T}_{d}$. Therefore,
to calculate the explicit entropy values for $E\times X$ on $\mathcal{T}_2$ and $E^{r}\times
_{i=1}^{d-r}X_{i}:=\left( E^{\times _{r}}\right) \times _{i=1}^{d-r}X_{i}$ on $\mathcal{T}_d$ is interesting and is presented in Theorem \ref{thm 3.1}, where $\left( E^{\times
	_{r}}\right) $ stands for the $r$-axial product of $E$, for $r\in\mathbb{N}$. In addition, we
focus on the lower bound for the values of $\{h(E^{r}\times
_{i=1}^{d-r}X_{i}):X_{i}$ is an SFT $\forall 1\leq i\leq d-r\}$ as well.
Theorem \ref{thm 4.2} reveals that different types, e.g., nonempty, general subshifts or SFTs, of $\times_{i=1}^{d-r}X_i\subseteq 
\mathcal{A}^{\mathcal{T}_d}$ give rise to different lower bounds of $%
h\left( E^{r}\times _{i=1}^{d-r}X_{i}\right) $. We stress that this
behavior cannot happen in the full axial extension shifts on $\mathbb{N}^{d}$.
Using the results of surface entropy of multiplicative systems and full
axial product on $\mathcal{T}_{d}$, we give the surface entropy of full
axial extension shifts on $\mathcal{T}_{d}$ (see Section \ref{sec surface} for details).

4). General trees. The aforementioned results focus on the axial product of
shifts on $\mathbb{N}^{d}$ and $\mathcal{T}_{d}$. The natural question is,
can these outcomes be extended to broader lattices? Theorem \ref{thm 5.1} gives the explicit formula for $h(E\times X)$ and $h(X\times E)$, where $E\times X$ and $X\times E$ are the full
axial extensions on golden-mean tree $\mathcal{G}$, and it appears that both values do not generally coincide. The \emph{golden-mean tree} $%
\mathcal{G}$ is a kind of Markov-Cayley tree (defined in Section \ref{sec 5}) with the
adjacency matrix is $M=\left[\begin{array}{cc}
1 & 1 \\ 
1 & 0%
\end{array}%
\right] $. Generally, let $\mathcal{T}_{n}=\{g\in \mathcal{T}:\left\vert g\right\vert
=n\}$, and $\gamma =\lim_{n\rightarrow \infty }\frac{|\mathcal{T}_{n+1}|}{%
	|\mathcal{T}_{n}|}$, Theorem \ref{thm general} demonstrates that if $\gamma >1$, then for all axial product $X_{1}\times X_{2}$ we have $h(X_{1}\times X_{2})>
h(X_{2})$. However, if $\gamma =1$, there exists an example of axial
product $X_1\times X_2$ on which $h(X_{1}\times X_{2})=h(X_{2})$.
This can be seen an analogous result as that of Theorem \ref{thm 3.1} for the cases where the underlined space is $\mathbb{N}^d$.

\section{The entropy structure of the axial product on $\mathbb{N}^{2}$ and $\mathcal{T}_{2}$}

\subsection{Entropy structure}
Let $\mathbb{N}$ be the set of positive integers. For $d\in\mathbb{N}$, $\mathbb{N}^d=\{(n_1,...,n_d):n_i\in\mathbb{N}\mbox{ for all }1\leq i \leq d\}$, and $d$-tree $\mathcal{T}_d$ is a semigroup which is generated by $d$ generators.

Let $1\leq r \leq d$. Define the set of entropies of $d$ dimensional full $r$ extension on $d$-tree by
\[
\mathcal{H}^{d,r}=\left\{ h(E^r\times_{i=1}^{d-r} X_i): X_1,...,X_{d-r}\mbox{ are SFTs} \right\},
\]
and if $X_i=X$ for all $1\leq i \leq d-r$, we denote $\mathcal{H}^{d,r}$ by $\mathcal{H}_i^{d,r}$. Similarly, we define the set of entropies of $d$ dimensional full $r$ extension on $\mathbb{N}^d$ by $\mathbf{H}^{d,r}$ and if $X_i=X$ for all $1\leq i \leq d-r$, we denote $\mathbf{H}^{d,r}$ by $\mathbf{H}_i^{d,r}$. For simplicity, we only consider $d=2$ and $r=0$. The following notations are also needed. Let $\mathbb{Z}_{m\times n}=\{1,...,m\}\times \{1,...,n\}$, and $\mathbb{Z}_n=\{1,...,n\}$, $\Delta_n=\{g\in\mathcal{T}: |g|\leq n\}$ and $\Delta_0=\{\epsilon\}$.

\begin{theorem}\label{thm 2.1}
	The closures of $\mathbf{H}^{2,0}_i,\mathbf{H}^{2,0}$ and $\mathcal{H}^{2,0}$ equal $[0,\infty)$. Moreover, the closure $\mathcal{H}^{2,0}_i$ does not intersect $(0,\frac{\log 2}{2})$. 
\end{theorem}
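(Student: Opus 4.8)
My plan is to treat the four closure assertions separately, isolating the genuinely new phenomenon (the gap $(0,\frac{\log 2}{2})$ for isotropic tree products) from the three density statements, which I expect to follow from explicit families. For the anisotropic $\mathbb{N}^2$ case I would use a free direction: taking $X_1=E$ the full shift, the identity $h(E\otimes X_2)=h(X_2)$ noted in the introduction reduces the problem to realizing a dense set of one-dimensional SFT entropies, which is classical (entropies of one-dimensional SFTs are dense in $[0,\infty)$, cf. \cite{Lind-ETaDS1984}); since $E\otimes X_2\in\mathbf{H}^{2,0}$ this gives $\overline{\mathbf{H}^{2,0}}=[0,\infty)$. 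For the anisotropic tree case I would again place a full shift $E$ on $q$ symbols in the $f_1$-direction and, in the $f_2$-direction, the SFT $X_2$ forcing every $f_2$-successor into a fixed $p$-element subalphabet. Writing $N_n(a)$ for the number of admissible labelings of $\Delta_n$ with root symbol $a$, the axial recursion collapses (every $N_n(a)$ becomes independent of $a$) to a scalar recursion of the form $t_n=q\,t_{n-1}^2$, whence $h(E\times X_2)=\tfrac12\log(pq)$. Letting $p,q$ range over the positive integers makes $\{\tfrac12\log(pq)\}$ dense in $[0,\infty)$, so $\overline{\mathcal{H}^{2,0}}=[0,\infty)$; note this already exhibits $h(E\times X_2)\ne h(X_2)$, the tree-specific failure advertised in the introduction.

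The isotropic $\mathbb{N}^2$ density is harder, because now the two directions are coupled and no factor is free. Here I would use a two-parameter family of colored hard-core SFTs: on the alphabet $\{0,1,\dots,m\}$ impose, symmetrically in both axes, that the $m$ nonzero symbols are mutually nonadjacent within a fixed exclusion range $R$, with $0$ unconstrained. The value $h(X\otimes X)$ of such a model has no closed form, but crude transfer-matrix/interface estimates should pin it between constant multiples of $R^{-2}\log m$; as $(m,R)$ vary these values fill $[0,\infty)$ densely. Establishing such two-sided entropy bounds for the coupled model is the main labor in this part.

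The real content is the last assertion, which I would prove as a dichotomy: for any SFT $X$, either $h(X\times X)=0$ or $h(X\times X)\ge\frac{\log2}{2}$; since $\{0\}\cup[\frac{\log2}{2},\infty)$ is closed, this is exactly the statement that $\overline{\mathcal{H}^{2,0}_i}$ misses $(0,\frac{\log2}{2})$. Present $X$ by a transition matrix $A$ and let $N_n(a)$ count root-$a$ labelings of $\Delta_n$, so isotropy gives the squaring recursion $N_n(a)=\big((A N_{n-1})(a)\big)^2$; with $\phi_n=\max_a N_n(a)$ and $|\Delta_n|=2^{n+1}-1$ one gets $h(X\times X)=\tfrac12\lim_n \tfrac{\log\phi_n}{2^n}$. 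After passing to the essential part of $A$ (iteratively deleting sources and dead ends, which only shrinks the system and hence can only lower the entropy), I would split on the recurrent structure. If every symbol lying on a cycle has out-degree $1$, then the recurrent part is a disjoint union of simple cycles and each ray is eventually forced, so $N_n(a)$ stays bounded and $h(X\times X)=0$. Otherwise some cyclic symbol $c$ has out-degree $\ge2$; then $\phi_1\ge(\mathrm{outdeg}\,c)^2\ge4$, while essentiality (every symbol has a predecessor) yields $\phi_n\ge\phi_{n-1}^2$ for all $n$, by transporting the previous maximizer one step up an incoming edge and discarding the remaining terms. Hence $\tfrac{\log\phi_n}{2^n}$ is nondecreasing with first term $\ge\log2$, so $\lim_n\tfrac{\log\phi_n}{2^n}\ge\log2$ and $h(X\times X)\ge\frac{\log2}{2}$.

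The step I expect to be most delicate is the zero-entropy side of this dichotomy: showing that transient (non-recurrent) branching cannot manufacture doubly-exponential growth. A symbol of out-degree $\ge2$ that is \emph{not} on a cycle can still carry large $N_n$-values, since its count is a square of sums of its successors' counts; one must argue that, absent a recurrent branch, these contributions stay sub-doubly-exponential so that $\lim\tfrac{\log\phi_n}{2^n}=0$ is undisturbed. Making this reduction to the essential/recurrent picture precise—so that the clean monotonicity argument above applies verbatim—is where I would concentrate the effort, and it is exactly this recurrent-branch mechanism that pins the threshold at $\frac{\log 2}{2}$ rather than at any smaller value.
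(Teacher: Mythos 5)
Your part (1) (anisotropic $\mathbb{N}^2$ via $h(E\otimes X_2)=h(X_2)$ and density of one-dimensional SFT entropies) matches the paper. The serious problems are in your two density constructions. For the anisotropic tree case, your own computation gives $h(E\times X_2)=\tfrac12\log(pq)$, and since $pq$ ranges only over positive integers, the set of values you realize is exactly $\{\tfrac12\log N: N\in\mathbb{N}\}$. This set is discrete and closed in $[0,\infty)$; it is not dense --- its closure misses the whole interval $(0,\tfrac{\log 2}{2})$, the very gap you later prove for the isotropic tree case. What is missing is a mechanism for diluting entropy over long periods. The paper takes $X_1$ to be the SFT whose graph is an $n$-cycle with one stage fattened into $2^m$ parallel vertices, and $X_2$ the identity-matrix SFT; a Perron--Frobenius count shows the fat stage occupies roughly $2^k/n$ of the $2^k$ vertices at level $k$, giving $h(X_1\times X_2)=\tfrac{m\log 2}{2n}$, and $\{\tfrac{m\log 2}{2n}: m,n\in\mathbb{N}\}$ is dense. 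Your isotropic $\mathbb{N}^2$ plan fails for a related reason: two-sided bounds $c_1R^{-2}\log m\le h\le c_2R^{-2}\log m$ with $c_1<c_2$ do not imply density of the achieved values; they only force values into intervals of fixed multiplicative width, so any target interval $(a,b)$ with $b/a<c_2/c_1$ could a priori be missed entirely. Density needs exact (or asymptotically sharp) values, which the paper again extracts from the fat-cycle SFT: on $kn\times k$ rectangles the count is exactly $n(2^m)^{k^2}$, hence $h(X\otimes X)=\tfrac{m\log 2}{n}$, and these values are dense.

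Your part (4) is the strongest piece and genuinely different from the paper: the paper fills level $n-1$ with a branching symbol of an irreducible matrix to force at least $2^{2^n}$ patterns, and its trichotomy does not explicitly treat reducible matrices containing a nontrivial cycle, whereas your inequality $\phi_n\ge\phi_{n-1}^2$ (push the maximizer one step up an incoming edge) together with $\phi_1\ge 4$ covers every essential graph that is not a disjoint union of cycles. The gap you flag on the zero-entropy side is real but closes with one observation you never state: $\mathcal{P}(\Delta_n,\cdot)$ counts restrictions of \emph{global} configurations on the infinite tree. Consequently a symbol all of whose forward paths die out can never occur at any vertex (every vertex has arbitrarily deep descendants), and a symbol removed at round $k$ of iterated source-deletion can occur only at levels $<k$, so the non-essential symbols contribute a bounded factor to the count and $h(X\times X)=h(X_{\mathrm{ess}}\times X_{\mathrm{ess}})$, after which your dichotomy applies verbatim. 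This observation is not cosmetic: for locally admissible counting your zero-entropy claim is actually false --- an $n$-cycle in which every state also has $k$ dead-end successors admits about $(k+1)^{2^n}$ locally admissible fillings of $\Delta_n$, which would give ``entropy'' $\tfrac{\log(k+1)}{2}$, while the true entropy is $0$.
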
	

\begin{proof}
		We first prove that the closure of $\mathbf{H}^{2,0}_i$ equals $[0,\infty)$. Let $X=X_A$ be an SFT with transition matrix $A$, where $A=[A_{i,j}]$ is a $2^m+n-1$ by $2^m+n-1$ matrix and
	\begin{equation*}
		A_{i,j}=\left\{\begin{array}{ll}
			1&\mbox{, if }1\leq i \leq 2^m \mbox{ and }j=2^m+1,\\
			1&\mbox{, if }2^m+1\leq i \leq 2^m+n-2\mbox{ and }j=i+1, \\
			1&\mbox{, if }i=2^m+n-1\mbox{ and } 1\leq j \leq 2^m,\\
			0&\mbox{, otherwise.}
		\end{array} \right.
	\end{equation*}
	See Figure \ref{mncycle} for the directed graph of the transition matrix $A$.
\begin{figure} 
	\centering 
	\includegraphics[width=0.6\textwidth]{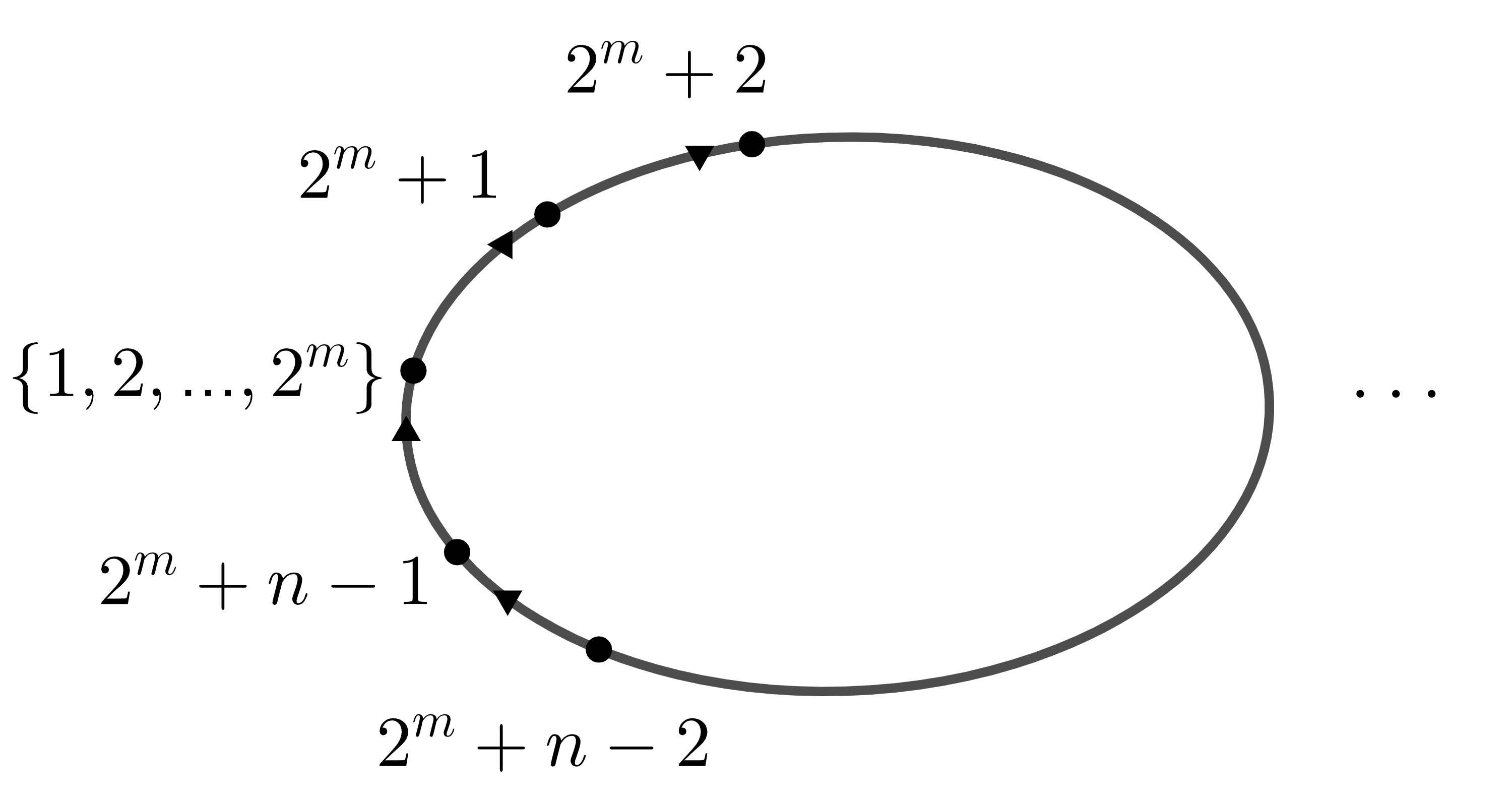} 
	\caption{The directed graph of the transition matrix $A$.} 
	\label{mncycle} 
\end{figure}

In the following, we claim that for $k\geq 1$,
	\begin{equation*}
		\left|\mathcal{P}(\mathbb{Z}_{kn\times k},X\otimes X)\right|=n(2^m)^{k^2}.
	\end{equation*}
 Indeed, due to the rule of $A$, when the symbol on $\mathbb{Z}_{1\times 1}$ is fixed then the positions of symbols $1\leq i\leq 2^m$ are uniquely determined. Thus, if the symbol on $\mathbb{Z}_{1\times 1}$ is in $\{2^m+1,...,2^m+n-1\}$, then we have $k^2$ positions on $\mathbb{Z}_{kn\times k}$ having $2^m$ choices. If the symbol on $\mathbb{Z}_{1\times 1}$ is in $\{1,...,2^m\}$, then we have $k^2-1$ positions on $\mathbb{Z}_{kn\times k}$ having $2^m$ choices. Hence,
 \begin{align*}
 \left|\mathcal{P}(\mathbb{Z}_{kn\times k},X\otimes X)\right|&=|\{2^m+1,...,2^m+n-1\}|(2^m)^{k^2}+|\{1,...,2^m\}|(2^m)^{k^2-1}\\
 &=(n-1)(2^m)^{k^2}+(2^m)(2^m)^{k^2-1}\\
 &=n(2^m)^{k^2}.
 \end{align*}
 The proof of claim is complete.

By the existence of the entropy of $X\otimes X$ and the above claim, we have
	\begin{align*}
		h(X\otimes X)&=\lim_{k,\ell \to \infty}\frac{\log|\mathcal{P}(\mathbb{Z}_{k\times \ell},X\otimes X)|}{k\ell}\\
		&=\lim_{k \to \infty}\frac{\log|\mathcal{P}(\mathbb{Z}_{kn\times k},X\otimes X)|}{k^2n}\\
		&=\lim_{k \to \infty}\frac{\log n+ k^2\log 2^m}{k^2n}\\
		&=\frac{m\log 2}{n}.
	\end{align*}
	Thus, the closure of $\{h(X\otimes X)=\frac{m\log 2}{n}: m,n\in\mathbb{N}\}\subseteq\mathbf{H}^{2,0}_i$ is $[0,\infty)$. Since $h(X\otimes X)\geq 0$ for all SFT $X$, we have the closure of $\mathbf{H}^{2,0}_i$ is $[0,\infty)$.
	
	Secondly, we prove that the closure of $\mathbf{H}^{2,0}$ equals $[0,\infty)$. By Theorem \ref{thm 3.1}, we have $\{h(X_2):X_2\mbox{ is an SFT}\}\subseteq \mathbf{H}^{2,0}$.
	Since the closure of $\{h(X_2):X_2\mbox{ is an SFT}\}$ is $[0,\infty)$, and $h(X_1\otimes X_2)\geq 0$ for all SFTs $X_1$ and $X_2$, we have the closure of $\mathbf{H}^{2,0}$ is $[0,\infty)$. 
	
	Thirdly, we prove that the closure of $\mathcal{H}^{2,0}$ equals $[0,\infty)$. Let $X_1=X_A$ be an SFT with transition matrix $A$ as above, and $X_2$ be an identity matrix. For $k\geq 1$, let $\mathcal{T}_k=\{g\in \mathcal{T}_2: |g|=k\}$, then we have the following recurrence relation: 
	
	For $k\geq 1$ and $0\leq j \leq n-1$, let $a^{\{1,...,2^m\}}_k(j)$ be the number of positions (labeled the symbol in $\{1,...,2^m\}$) on $\mathcal{T}_k$ and $a_k^{i}(j)$ be the number of positions (labeled the symbol $i, 2^m+1\leq i\leq 2^m+n-1$) on $\mathcal{T}_k$. We have 
	\begin{equation}\label{recurrence 1}
	\left[\begin{matrix}
	a^{\{1,...,2^m\}}_k(j)\\
	a^{2^m+1}_k(j)\\
	\vdots\\
	a^{2^m+n-1}_k(j)
	\end{matrix}\right]=\left[\begin{matrix}
	1&& &&1\\
	1&1&\\
	&\ddots&\ddots\\
	&&\ddots&1&\\
	&&&1&1	
	\end{matrix}\right]_{n\times n}\left[\begin{matrix}
	a^{\{1,...,2^m\}}_{k-1}(j)\\
	a^{2^m+1}_{k-1}(j)\\
	\vdots\\
	a^{2^m+n-1}_{k-1}(j)
	\end{matrix}\right]
	\end{equation}
	with initial vector $(a_0^{\{1,...,2^m\}}(j),...,a_0^{2^m+n-1}(j))^t=e_{j+1}$ and $\{e_1,...,e_n\}$ is standard basis in $\mathbb{R}^n$. See Figure \ref{recurrsive} for the recursive relation (\ref{recurrence 1}).	\begin{figure} 
		\centering 
		\includegraphics[width=1\textwidth]{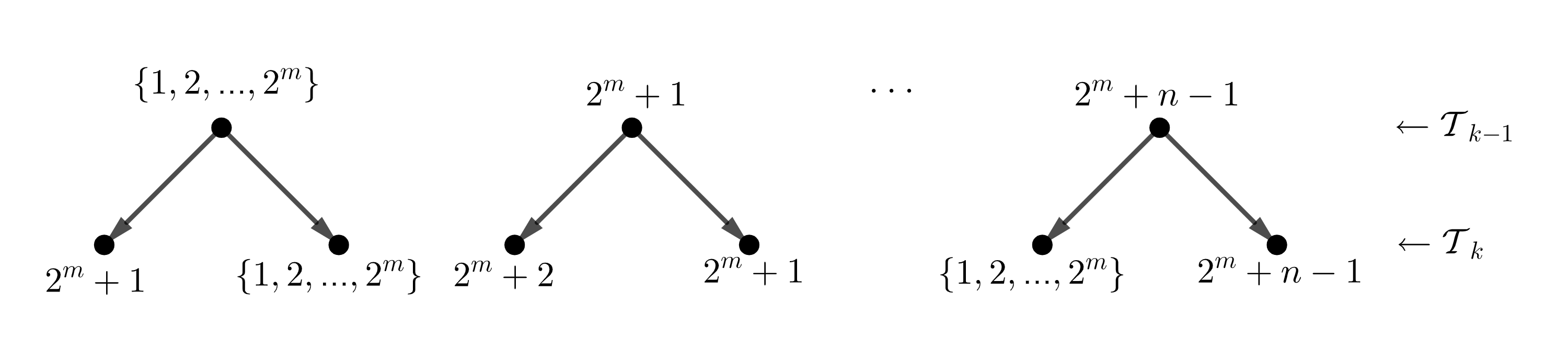} 
		\caption{The recurrsive relation of $a^i_k(j)$ from $\mathcal{T}_{k-1}$ to $\mathcal{T}_k$.} 
		\label{recurrsive} 
	\end{figure}

	Then we have
	\begin{equation*}
		\left|\mathcal{P}\left(\Delta_k,X_1\times X_2\right)\right|=\sum_{j=0}^{n-1}(2^m)^{a_k^{\{1,...,2^m\}}(j)}.
	\end{equation*}
	
	By Theorem 4.5.12 \cite{LM-1995} with left and right eigenvectors $v=w^t=\frac{1}{\sqrt{n}}(1,...,1)$ corresponding to the maximum eigenvalue $2$, after computation, we have $a_k^{\{1,...,2^m\}}(j)=\frac{2^k}{n}+o(2^k)$ for all $0\leq j \leq n-1$. This implies   
	\begin{align*}
		h(X_1\times X_2)&=\lim_{k\to\infty}\frac{\log\left|\mathcal{P}\left(\Delta_k,X_1\times X_2\right)\right|}{1+2+\cdots+2^k}\\
		&=\lim_{k\to\infty}\frac{\log\sum_{j=0}^{n-1}(2^m)^{a_k(j)}}{2^{k+1}-1}\\
		&=\lim_{k\to\infty}\frac{\log n+\left(\frac{m2^k}{n}+o(2^k)\right)\log2}{2^{k+1}-1}\\
		&=\frac{m\log 2}{2n}.
	\end{align*}
	Thus, the closure of $\{\frac{m\log 2}{2n}: m,n\in\mathbb{N}\}$ is $[0,\infty)$. Since $h(X_1\times X_2)\geq 0$ for all SFTs $X_1$ and $X_2$, we have the closure of $\mathcal{H}^{2,0}$ is $[0,\infty)$. 

 Finally, we prove that the closure of $\mathcal{H}^{2,0}_i$ does not intersect $(0,\frac{\log 2}{2})$. We divide the proof into the following three cases: (1) If the transition matrix of $X$ contains no irreducible component, then the global pattern does not exist. Thus, $h(X\times X)=0$. (2) If the transition matrix of $X$ is irreducible and contains no row with a row sum greater than or equal to 2, then the number of global patterns is less than or equal to the number of symbols, that is 
	\[
	\left|\mathcal{P}(\Delta_n,X\times X)\right|\leq 2
	\]
	for all $n\geq 1$. Hence, we have $h(X\times X)=0$. (3) The transition matrix of $X$ is irreducible and contains a row $i$ with its row sum greater than or equal to 2. Then we put the symbol $i$ on $\mathcal{T}^{n-1}$, we have
	\begin{equation*}
		\left|\mathcal{P}(\Delta_n,X\times X)\right|\geq 2^{2^{n}}.
	\end{equation*}
	Thus, $h(X_1\times X_1)\geq \frac{\log 2}{2}$. Therefore, the proof of theorem is complete.
\end{proof}

\subsection{Transitivity and entropy}
Let $X\subseteq \mathcal{A}^\mathbb{N}$ be a subshift and $\sigma$ be the shift map. We say $X$ is \emph{transitive} if for any patterns $u,v$ of $X$, there exists an $n\in\mathbb{N}$ such that $\sigma^n([u])\cap [v]\neq \emptyset$, where $[w]$ is the cylinder set of the pattern $w$. A point $x\in X$ is called a \emph{periodic point} if there is a period $p\in\mathbb{N}$ such that $\sigma^p(x)=x$. 

Let $X\subseteq \mathcal{A}^{\mathbb{N}^2}$ be a subshift. The \emph{horizontal} shift map $\sigma_1$ and the \emph{vertical} shift map $\sigma_2$ are defined by $(\sigma_1(x))_{i,j}=(x)_{i+1,j}$ and $(\sigma_2(x))_{i,j}=(x)_{i,j+1}$ for all $i,j\geq 1,x\in X$. We say $X$ is \emph{transitive} if for any patterns $u,v$ of $X$, there exist $n,m\in\mathbb{N}$ such that $\sigma_1^n\sigma_2^m([u])\cap [v]\neq \emptyset$. A point $x\in X$ is called a \emph{periodic point} if there exist $p_1,p_2\in\mathbb{N}$ such that $\sigma_1^{p_1}\sigma_2^{p_2}(x)=x$. 

Let $T\subseteq \mathcal{A}^{\mathcal{T}_2}$ be a 2-tree-shift with the shift map $\sigma$. We say $T$ is \emph{transitive} if for any patterns $u,v$ of $T$, there exists a $w\in \mathcal{T}_2$ such that $\sigma^w([u])\cap [v]\neq \emptyset$. A point $t\in T$ is called a \emph{periodic point} if there is a $w\in \mathcal{T}_2$ such that $\sigma^{w}(t)=t$. 

The following theorem is directly obtained from Theorem \ref{thm 2.1}.
	
\begin{theorem}\label{thm inf}
	 We have $\inf\mathbf{H}^{2,0}$ (resp. $\inf\mathcal{H}^{2,0}$) equals 0, where the infimum is taken over all $X_1\times X_2$ (resp. $X_1\otimes X_2$) is transitive and has a periodic point.  
\end{theorem}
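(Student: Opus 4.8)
The plan is to observe that the two families of axial products already constructed in the proof of Theorem \ref{thm 2.1} are transitive, carry periodic points, and have entropies decreasing to $0$; since the entropy of an axial product is always nonnegative, this forces both infima to equal $0$. Concretely, for $\mathbf{H}^{2,0}$ I would take the isotropic products $X\otimes X$ with $X=X_A$ the SFT from the first part of Theorem \ref{thm 2.1}, for which $h(X\otimes X)=\frac{m\log 2}{n}$; fixing $m=1$ and letting $n\to\infty$ gives strictly positive entropies tending to $0$. For $\mathcal{H}^{2,0}$ I would take the products $X_1\times X_2$ with $X_1=X_A$ and $X_2$ the identity SFT from the third part, for which $h(X_1\times X_2)=\frac{m\log 2}{2n}$, again tending to $0$ as $n\to\infty$. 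It therefore remains only to check that every member of these two families is transitive and has a periodic point.

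The structural fact I would use is the rigidity underlying the pattern counts in Theorem \ref{thm 2.1}: in any configuration the coordinates carrying a ``free'' symbol from $\{1,\dots,2^m\}$ are confined to a single residue class modulo the cycle length $n$ --- the diagonal $i+j\equiv r_0\pmod n$ on $\mathbb{N}^2$, and the nodes whose number of $f_1$'s is $\equiv r_0\pmod n$ on $\mathcal{T}_2$ --- while every other coordinate is forced to be a symbol of the deterministic block $2^m+1,\dots,2^m+n-1$. Furthermore each free coordinate may be assigned any symbol of $\{1,\dots,2^m\}$ independently of the others, subject on the tree only to the constancy that the identity factor $X_2$ imposes along $f_2$-edges. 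Periodic points are then immediate: putting the symbol $1$ at every free coordinate yields a configuration of horizontal and vertical period $n$ on $\mathbb{N}^2$, fixed by $\sigma_1^{n}\sigma_2^{n}$, and a configuration on $\mathcal{T}_2$ fixed by $\sigma^{f_1^{n}}$.

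For transitivity, each admissible pattern carries a phase in $\mathbb{Z}/n\mathbb{Z}$ recording the residue class on which its free coordinates lie, and translating a pattern shifts this phase --- by $-(s+t)$ under $\sigma_1^{s}\sigma_2^{t}$ on $\mathbb{N}^2$, and by minus the number of $f_1$'s in $w$ under $\sigma^{w}$ on $\mathcal{T}_2$. Given two patterns $u,v$ with phases $a,b$, I would choose a translation large enough to separate their windows and whose phase shift reconciles $a$ with $b$; on the tree I would in addition route $w$ across an $f_1$-edge lying below the support of $v$, so that the $f_2$-chains of the translated copy of $u$ are disjoint from those of $v$. One then assembles a single configuration by fixing the common phase, copying the free symbols of $v$ and of the translated $u$ into their respective windows, and choosing the remaining free symbols arbitrarily; independence of the free assignments places this configuration in the axial product and shows $\sigma_1^{s}\sigma_2^{t}([u])\cap[v]\neq\emptyset$ (resp. $\sigma^{w}([u])\cap[v]\neq\emptyset$). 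Combined with the periodic points and the vanishing entropy values, this gives $\inf\mathbf{H}^{2,0}=\inf\mathcal{H}^{2,0}=0$.

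The main obstacle I anticipate is the transitivity step, and within it the tree bookkeeping: because the identity factor $X_2$ copies each free symbol down every $f_2$-edge, the free coordinates are organized into $f_2$-chains rather than isolated sites, and one must ensure that the chains meeting the window of $u$ never collide with those meeting the window of $v$. This is exactly what compels the translation $w$ to cross an $f_1$-edge beneath the support of $v$, and verifying that such a choice both realizes the required phase shift and keeps the two windows (and their $f_2$-chains) independent is the delicate part of the argument; the $\mathbb{N}^2$ case is comparatively routine, since there the free sites are genuinely independent.
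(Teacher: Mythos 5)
Your proposal is correct and takes essentially the same route as the paper: the paper's entire proof consists of observing that the families $X\otimes X$ and $X_1\times X_2$ constructed in the proof of Theorem \ref{thm 2.1}, whose entropies $\frac{m\log 2}{n}$ and $\frac{m\log 2}{2n}$ accumulate at $0$, are transitive and have periodic points. Your phase bookkeeping and the routing of $w$ below the support of $v$ on the tree are simply a careful write-up of the verification the paper dismisses as ``easy to check.''
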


\begin{proof}
It is easy to check that the cases of $X_1\times X_2$ and $X_1 \otimes X_2$ in the proof of the Theorem \ref{thm 2.1} are transitive and have a periodic point. The proof is complete.
\end{proof}	

\section{The entropy structures of full axial extension shifts on $\mathbb{N}^{d}$ and $\mathcal{T}_{d}$}

In this section, we investigate the entropy structures of the full axial extension shifts on $\mathbb{N}^d$ and $\mathcal{T}_d$.

\subsection{Entropy structure}
In this subsection, we provide the rigorous entropy formulas for the full axial extension shifts on $\mathbb{N}^d$ and $\mathcal{T}_d$. In the following theroem, it is easy to see that the formulas are quite different.   

\begin{theorem}\label{thm 3.1}
	Let $d\geq 2$ and $d\geq r\geq 1$ be two positive integers. If $X_1,...,X_{d-r}$ are subshifts, then 
	\item[1.] $h(E^r\otimes_{i=1}^{d-r} X_i)=h(\otimes_{i=1}^{d-r} X_i)$, and
	\item[2.] $h(E^r\times_{i=1}^{d-r} X_i)=r(d-1)\sum_{j=1}^\infty \frac{\log\left|\mathcal{P}(\Delta_{j-1},\times_{i=1}^{d-r} X_i)\right|}{d^{j+1}}$.
\end{theorem}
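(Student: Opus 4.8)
The plan is to handle the two identities separately, since the amenable lattice $\mathbb{N}^d$ and the non-amenable tree $\mathcal{T}_d$ force genuinely different counting arguments. For the first identity on $\mathbb{N}^d$, I would reorganize the box $F_n=[1,n]^d$ into slices along the $r$ full-shift directions. After relabeling coordinates so that the first $r$ axes carry $E$ and the last $d-r$ carry $X_1,\ldots,X_{d-r}$, fix $(g_1,\ldots,g_r)\in[1,n]^r$ and consider the slice $S_{(g_1,\ldots,g_r)}=\{(g_1,\ldots,g_r)\}\times[1,n]^{d-r}$. Every axial constraint in a constrained direction lives inside a single slice, while each of the first $r$ directions is a copy of $E$ and imposes nothing. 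Hence a pattern on $F_n$ is admissible precisely when each slice is an admissible pattern of $\otimes_{i=1}^{d-r}X_i$ on $[1,n]^{d-r}$, and distinct slices are mutually unconstrained. This yields the exact factorization $\left|\mathcal{P}(F_n,E^r\otimes_{i=1}^{d-r}X_i)\right|=\left|\mathcal{P}([1,n]^{d-r},\otimes_{i=1}^{d-r}X_i)\right|^{n^r}$; dividing by $|F_n|=n^d$ and letting $n\to\infty$ gives $h(E^r\otimes_{i=1}^{d-r}X_i)=h(\otimes_{i=1}^{d-r}X_i)$, using existence of both limits.

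For the second identity on $\mathcal{T}_d$, the idea is a component decomposition of the tree. Color each generator as \emph{free} (the $r$ copies of $E$) or \emph{constrained} (the generators of $X_1,\ldots,X_{d-r}$) and delete every edge labeled by a free generator. The remaining forest splits $\mathcal{T}_d$ into constrained components, each isomorphic to a subtree of $\mathcal{T}_{d-r}$ and rooted either at $\epsilon$ or at a vertex whose parent edge is free. The crucial observation is that every maximal monochromatic line in a constrained direction $f_i$ lies entirely inside one constrained component, so the axial constraints of $\times_{i=1}^{d-r}X_i$ act independently on distinct components, while the free edges couple nothing across components. Consequently the admissible patterns on $\Delta_n$ factorize as a product over components, and a component rooted at depth $\ell$, truncated to $\Delta_n$, is exactly a ball of radius $n-\ell$ in $\mathcal{T}_{d-r}$, contributing the factor $\left|\mathcal{P}(\Delta_{n-\ell},\times_{i=1}^{d-r}X_i)\right|$.

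It then remains to count roots. There is one root at depth $0$ (namely $\epsilon$), and at each depth $\ell\geq 1$ a root is a vertex whose parent edge is free, of which there are $r\,d^{\ell-1}$. Writing $M_j:=\left|\mathcal{P}(\Delta_j,\times_{i=1}^{d-r}X_i)\right|$, this yields
\[
\log\left|\mathcal{P}(\Delta_n,E^r\times_{i=1}^{d-r}X_i)\right|=\log M_n+\sum_{\ell=1}^{n}r\,d^{\ell-1}\log M_{n-\ell}.
\]
Dividing by $|\Delta_n|=\frac{d^{n+1}-1}{d-1}$, the term $\log M_n$ is negligible because $M_n$ grows at most like a constant times $(d-r)^{n}$ while $d-r<d$; after the substitution $j=n-\ell$ and a final index shift, the average converges to $r(d-1)\sum_{j=1}^{\infty}d^{-(j+1)}\log M_{j-1}$, which is the asserted formula.

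The main obstacle, and the step deserving the most care, is the rigorous justification of the factorization over constrained components: one must verify that each constrained axial line is confined to a single component and that the $X_i$-admissibility of a component coincides with the $\times_{i=1}^{d-r}X_i$-admissibility of the corresponding ball in $\mathcal{T}_{d-r}$. Because the theorem allows general subshifts $X_i$ rather than only SFTs, I would argue this directly from the defining line-constraints instead of a transition matrix. The second delicate point is the interchange of limit and summation: since $r\geq 1$ forces $d-r\leq d-1$, one has $\log M_j=O\!\left((d-1)^j\right)$, so the tail of $\sum_j d^{-(j+1)}\log M_{j-1}$ is dominated by a convergent geometric series; dominated convergence then guarantees both convergence of the series and that the weighted average over $\Delta_n$ tends to it.
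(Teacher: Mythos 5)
Your proposal is correct and takes essentially the same route as the paper: part 1 is the identical slice factorization $\left|\mathcal{P}(F_n,E^r\otimes_{i=1}^{d-r}X_i)\right|=\left|\mathcal{P}([1,n]^{d-r},\otimes_{i=1}^{d-r}X_i)\right|^{n^r}$, and in part 2 your constrained-component decomposition (one root at $\epsilon$ plus $rd^{\ell-1}$ roots at each depth $\ell$, each contributing a ball of radius $n-\ell$) is exactly the decomposition the paper establishes by induction, followed by the same limit computation. The only blemishes are cosmetic: writing $M_j=\left|\mathcal{P}(\Delta_j,\times_{i=1}^{d-r}X_i)\right|$, it is $\log M_n$ (not $M_n$) that is $O\!\left((d-r)^n\right)$, and in the edge case $d=2$, $r=1$ your bound $\log M_j=O\!\left((d-1)^j\right)$ should read $O(j)$; in every case $\sum_{j}d^{-(j+1)}\log M_{j-1}$ still converges, so nothing breaks.
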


\begin{proof}
	\item[\bf 1.] Due to the rule of $E^r\otimes_{i=1}^{d-r} X_i$, for any $n_1,...n_d\in\mathbb{N}$, the lattice $\mathbb{Z}_{n_1\times\cdots\times n_d}$ can be divided into $n_1\times\cdots\times n_r$ many $\mathbb{Z}_{n_{r+1}\times\cdots\times n_d}$ lattices in which \[\left|\mathcal{P}\left(\mathbb{Z}_{n_1\times\cdots\times n_d},E^r\otimes_{i=1}^{d-r} X_i\right)\right|=\left|\mathcal{P}\left(\mathbb{Z}_{n_{r+1}\times \cdots\times n_d},\otimes_{i=1}^{d-r} X_i\right)\right|^{n_1\cdots n_r}.\]
	 Then we have
		\begin{align*}
		h\left(E^r\otimes_{i=1}^{d-r} X_i\right)&=\lim_{n_1,...,n_d\to \infty}\frac{\log\left|\mathcal{P}\left(\mathbb{Z}_{n_1\times\cdots\times n_d},E^r\otimes_{i=1}^{d-r} X_i\right)\right|}{n_1\cdots n_d}\\
		&=\lim_{n_1,...,n_d\to \infty}\frac{\log\left|\mathcal{P}\left(\mathbb{Z}_{n_{r+1}\times \cdots\times n_d},\otimes_{i=1}^{d-r} X_i\right)\right|^{n_1\cdots n_r}}{n_1\cdots n_d}\\
		&=\lim_{n_{r+1},...,n_d\to\infty}\frac{\log\left|\mathcal{P}\left(\mathbb{Z}_{n_{r+1}\times \cdots\times n_d},\otimes_{i=1}^{d-r} X_i\right)\right|}{n_{r+1}\cdots n_d}\\
		&=h\left(\otimes_{i=1}^{d-r} X_i\right).
	\end{align*}
	\item[\bf 2.] Due to the rule of $E^r\times_{i=1}^{d-r} X_i$, for any $n\in\mathbb{N}$, the $\Delta_{n}$ of $d$-tree can be divided into a $\Delta_n$ of $(d-r)$-tree and $rd^{n-j}$ many $\Delta_{n-j}$ of $(d-r)$-tree for all $1\leq j \leq n$ in which
	\[
	\left|\mathcal{P}\left(\Delta_n,E^r\times_{i=1}^{d-r} X_i\right)\right|=\left|\mathcal{P}\left(\Delta_{n},\times_{i=1}^{d-r} X_i\right)\right|\prod_{j=1}^n\left|\mathcal{P}\left(\Delta_{j-1},\times_{i=1}^{d-r} X_i\right)\right|^{rd^{n-j}}.
	\]
	
	We prove the foregoing by induction. For $n=1$, the $\Delta_1$ of $d$-tree can be divided into $r$ many $\Delta_0$ and a $\Delta_1$ of $(d-r)$-tree in which
	\[
	\left|\mathcal{P}\left(\Delta_1,E^r\times_{i=1}^{d-r} X_i\right)\right|=\left|\mathcal{P}\left(\Delta_1,\times_{i=1}^{d-r} X_i\right)\right|\left|\mathcal{P}\left(\Delta_0,\times_{i=1}^{d-r} X_i\right)\right|^{r}.
	\]
	 Suppose it is true for $n=k$, then for $n=k+1$, the $\Delta_{k+1}$ of $d$-tree is the $\Delta_k$ of $d$-tree that add a $\mathcal{T}_{k+1}$ in the bottom. Then each partition of $\Delta_k$ of $d$-tree become to a partition of $\Delta_{k+1}$ of $d$-tree when it adds a level in the bottom. More precisely, the $\Delta_{j-1}$ of $(d-r)$-tree (in the partition of $\Delta_k$ of $d$-tree) becomes to a $\Delta_{j}$ of $(d-r)$-tree and $r(d-r)^{j-1}$ many $\Delta_0$ for all $1\leq j \leq k+1$ (in the partition of $\Delta_{k+1}$ of $d$-tree). Then, by the assumption that $\Delta_k$ of $d$-tree can be divided into a $\Delta_k$ of $(d-r)$-tree and $rd^{k-j}$ many $\Delta_{k-j}$ of $(d-r)$-tree for all $1\leq j \leq k$, we conclude that the $\Delta_{k+1}$ of $d$-tree can be divided into a $\Delta_{k+1}$ of $(d-r)$-tree, $rd^{k-j}$ many $\Delta_{k+1-j}$ of $(d-r)$-tree for all $1\leq j \leq k+1$ and $r(d-r)^k+\sum_{j=1}^k r(rd^{k-j})(d-r)^{j-1}=rd^k$ many $\Delta_0$ in which
	\[
	\left|\mathcal{P}\left(\Delta_{k+1},E^r\times_{i=1}^{d-r} X_i\right)\right|=\left|\mathcal{P}\left(\Delta_{k+1},\times_{i=1}^{d-r} X_i\right)\right|\prod_{j=1}^{k+1}\left|\mathcal{P}\left(\Delta_{j-1},\times_{i=1}^{d-r} X_i\right)\right|^{rd^{k+1-j}}.
	\]
	 The proof of induction is complete. See Figure \ref{inductionstep} for the induction step when $n=1$.  
	\begin{figure} 
		\centering 
		\includegraphics[width=1\textwidth]{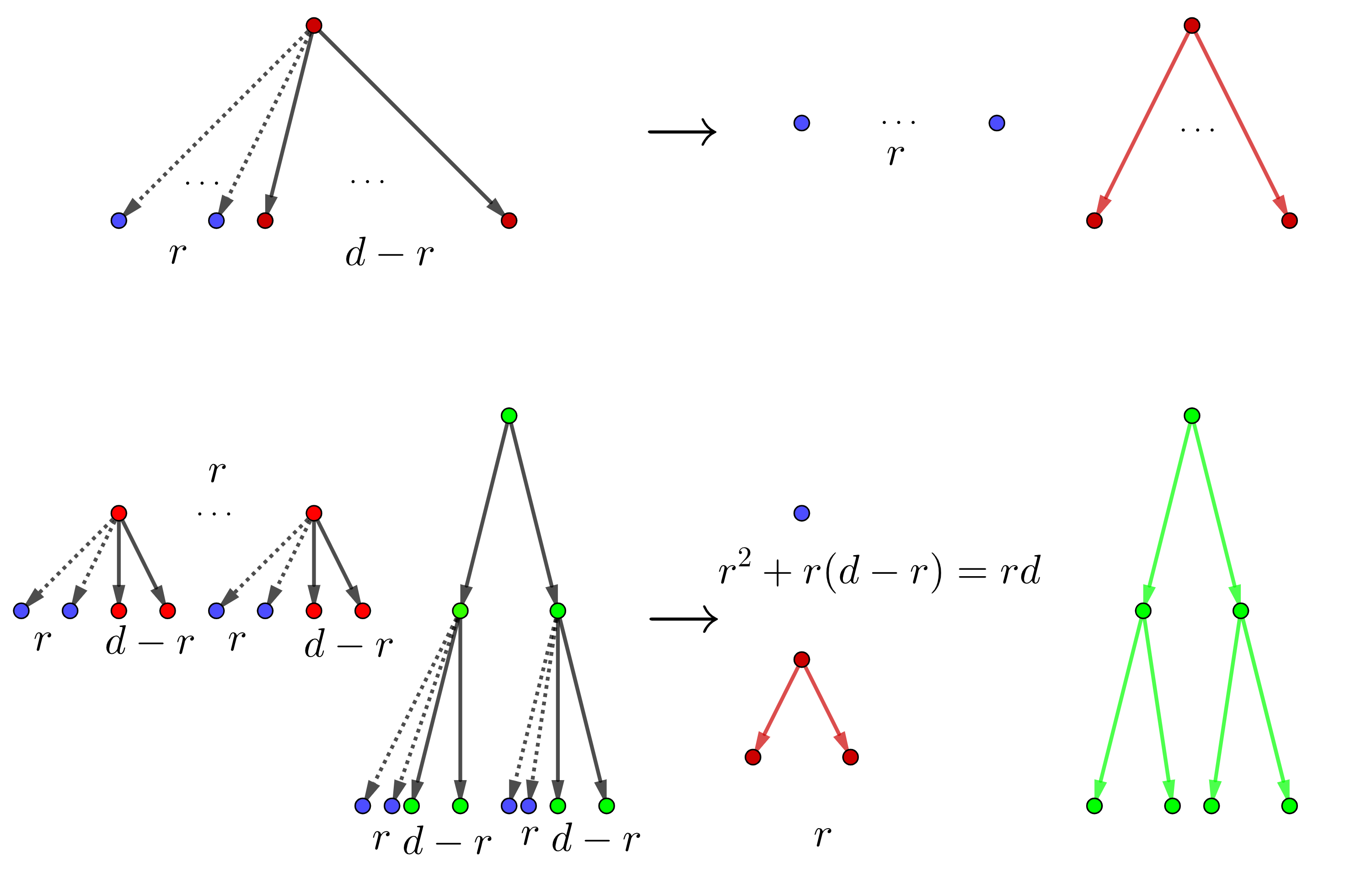} 
		\caption{The induction step for $d$-tree's $\Delta_1$ to $d$-tree's $\Delta_2$.} 
		\label{inductionstep} 
	\end{figure}

	Then we have the following two cases: (1) When $r=d$, we have
	\begin{align*}
		h\left(E^d \right)=\lim_{n\to \infty}\frac{\log\left|\mathcal{P}\left(\Delta_n,E^d\right)\right|}{\left|\Delta_n\right|}=\lim_{n\to \infty}\frac{\log 2^{\left|\Delta_{n}\right|}}{\left|\Delta_n\right|}=\log 2.
	\end{align*}
	(2) When $1\leq r<d$, since
	\begin{equation*}
	0\leq \lim_{n\to\infty}\frac{\log\left|\mathcal{P}\left(\Delta_n,\times_{i=1}^{d-r} X_i\right)\right|}{1+d+\cdots +d^n}\leq \lim_{n\to\infty}\frac{\log 2^{1+(d-r)+\cdots+(d-r)^n}}{1+d+\cdots +d^n}=0.
	\end{equation*}
	We have
	\begin{align*}
		h\left(E^r\times_{i=1}^{d-r} X_i\right)&=\lim_{n\to \infty}\frac{\log\left|\mathcal{P}\left(\Delta_n,E^r\times_{i=1}^{d-r} X_i\right)\right|}{\left|\Delta_n\right|}\\
		&=\lim_{n\to \infty}\frac{\log\left|\mathcal{P}\left(\Delta_{n},\times_{i=1}^{d-r} X_i\right)\right|\prod_{j=1}^n\left|\mathcal{P}\left(\Delta_{j-1},\times_{i=1}^{d-r} X_i\right)\right|^{rd^{n-j}}}{1+d+\cdots +d^n}\\
		&=\lim_{n\to \infty}\frac{\sum_{j=1}^n rd^{n-j}\log\left|\mathcal{P}\left(\Delta_{j-1},\times_{i=1}^{d-r} X_i\right)\right|}{1+d+\cdots +d^n}+\lim_{n\to\infty}\frac{\log\left|\mathcal{P}\left(\Delta_n,\times_{i=1}^{d-r} X_i\right)\right|}{1+d+\cdots +d^n}\\
		&=\lim_{n\to \infty}\frac{\sum_{j=1}^n rd^{-j}\log\left|\mathcal{P}\left(\Delta_{j-1},\times_{i=1}^{d-r} X_i\right)\right|}{1+d^{-1}+\cdots +d^{-n}}\\
		&=r(d-1)\sum_{j=1}^\infty \frac{\log\left|\mathcal{P}\left(\Delta_{j-1},\times_{i=1}^{d-r} X_i\right)\right|}{d^{j+1}}.
	\end{align*}
The proof is complete.
\end{proof}	
The set $\times_{i=1}^{d-r} X_i$ is called \emph{nonempty} if there exists a global pattern. For $k\in\mathbb{N}$, a \emph{$k$-permutation matrix} is a $k$ by $k$ binary matrix that has exactly one entry of 1 in each row and each column and 0 elsewhere. 

\begin{theorem}\label{thm 4.2} Let $\mathcal{A}=\{0,...,k-1\}$. For $d\geq 2$, $1\leq r \leq d$ and $\times_{i=1}^{d-r} X_i\subseteq \mathcal{A}^{\mathcal{T}_{d-r}}$, we have the following assertions.
	\item[1.] $\inf  h(E^r\times_{i=1}^{d-r} X_i)=\frac{r(d-1)\log k}{d^2}$, where the infimum is taken over all nonempty $\times_{i=1}^{d-r}X_i$.
	\item[2.] $\inf  h(E^r\times_{i=1}^{d-r} X_i)=\frac{r\log k}{d}$, where the infimum is taken over all subshift $\times_{i=1}^{d-r}X_i$.
	\item[3.] If $X_1,...,X_{d-r}$ are irreducible SFTs and $\times_{i=1}^{d-r} X_i$ is a subshift, then $h(E^r\times_{i=1}^{d-r} X_i)=\frac{r\log k}{d}$ if and only if $X_1,...,X_{d-r}$ are determined by $k$-permutation matrices.
\end{theorem}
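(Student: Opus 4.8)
The three assertions share one engine: the entropy formula in part~(2) of Theorem~\ref{thm 3.1}. Writing $Y=\times_{i=1}^{d-r}X_i$ and $a_j:=\left|\mathcal{P}(\Delta_{j-1},Y)\right|$ for the number of admissible patterns of $Y$ on the ball $\Delta_{j-1}$ of the $(d-r)$-tree, that formula reads
\[
h\!\left(E^r\times_{i=1}^{d-r}X_i\right)=r(d-1)\sum_{j=1}^{\infty}\frac{\log a_j}{d^{\,j+1}}.
\]
The whole problem is therefore to control the sequence $(a_j)_{j\ge 1}$ as the $X_i$ vary, and I would first record its two elementary properties. The restriction map $\mathcal{P}(\Delta_{j-1},Y)\to\mathcal{P}(\Delta_{j-2},Y)$ is surjective, so $(a_j)$ is non-decreasing; and since $Y$ lives on the $k$-letter alphabet $\mathcal{A}$, the root count obeys $a_1\le k$, with $a_1=k$ exactly when every letter occurs at the root — in particular whenever each $X_i$ has full support, e.g.\ when the $X_i$ are irreducible. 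Monotonicity together with $a_1\le k$ is what turns the series into sharp estimates.

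For assertion~2 I would use that over the admissible class every letter is realized, so $a_1=k$ and hence $a_j\ge k$ for all $j$; inserting $a_j\ge k$ and summing $\sum_{j\ge1}d^{-(j+1)}=\frac{1}{d(d-1)}$ gives $h\ge\frac{r\log k}{d}$. That this lower bound is attained I would show by an explicit extremizer with $a_j\equiv k$: take each $X_i$ to be the $k$ fixed points (the $k\times k$ identity transition matrix), so that an admissible global pattern of $Y$ is constant and is determined by its single root symbol. Then $a_j\equiv k$ and $h=\frac{r\log k}{d}$, proving the infimum.

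Assertion~1 is the infimum over the strictly larger class of all nonempty products, where the termwise bound $a_j\ge k$ is no longer available, so I would retain only the leading $j=1$ term of the non-negative series to get $h\ge r(d-1)\frac{\log a_1}{d^{2}}$, i.e.\ $h\ge\frac{r(d-1)\log k}{d^{2}}$ once the full alphabet is seen at the root. The difficult half is matching this value, and here the monotonicity $a_j\ge a_1$ pulls against us: once $a_1=k$ the tail $\sum_{j\ge2}d^{-(j+1)}\log a_j$ is itself bounded below, so the leading term can be approached only by products that realize the whole alphabet while adding as little pattern growth per level as the branching of $\mathcal{T}_{d-r}$ permits. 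Pinpointing this extremal regime — forcing the remaining freedom out toward the boundary of the tree so that only the boundary term survives — and verifying that the resulting infimum is exactly $\frac{r(d-1)\log k}{d^{2}}$ is the main obstacle of the theorem; it is the same surface/boundary effect that is made quantitative in Section~\ref{sec surface}.

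For assertion~3, irreducibility forces full support, so $a_1=k$ and the bound $h\ge\frac{r\log k}{d}$ from assertion~2 applies, with equality iff it holds termwise, i.e.\ iff $a_j=k$ for every $j$. I would then localize this global rigidity: the identity $a_2=k$ says the $k$ root patterns extend in only $k$ ways to $\Delta_1$, so each symbol admits a unique successor in each coordinate direction; hence every transition matrix of $X_i$ has all row sums equal to $1$, and irreducibility forces the associated functional graph to be a single $k$-cycle — that is, a $k$-permutation matrix. The converse is the computation already used in assertion~2: a permutation rule propagates the root symbol uniquely through the tree, so $a_j\equiv k$ and $h=\frac{r\log k}{d}$. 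The substantive direction is the forward one, and the passage from the global equality $a_j=k$ to the local ``one admissible successor per symbol'' is its technical heart.
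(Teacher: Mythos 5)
Your handling of assertions 2 and 3 is correct and essentially the paper's own argument: the paper likewise gets the lower bound $\frac{r\log k}{d}$ from $|\mathcal{P}(\Delta_n,\times_{i=1}^{d-r}X_i)|\ge k$ together with Theorem \ref{thm 3.1}(2), realizes it with $X_1=\cdots=X_{d-r}=\{0^\infty,\dots,(k-1)^\infty\}$, and for assertion 3 reduces equality to $|\mathcal{P}(\Delta_n,\times_{i=1}^{d-r}X_i)|=k$ for all $n$ before invoking irreducibility; your local step (``$a_2=k$ forces one admissible successor per symbol per direction, hence row sums one, hence a single $k$-cycle'') is in fact cleaner than the paper's detour through the condition $|A^n|=k$ for all $n$.

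Assertion 1 is where your proposal has a genuine gap: you prove only a conditional lower bound and explicitly leave the matching construction as ``the main obstacle,'' so nothing is established. Moreover, the obstacle is not a surface-entropy effect that more work along your lines would overcome. Your own monotonicity observation (restriction maps are surjective, so $a_j\ge a_1$) shows that under your reading of $\mathcal{P}$ -- patterns are restrictions of global configurations, which is what the paper literally defines -- the claimed identity cannot hold: if $a_1=k$ then $h\ge\frac{r\log k}{d}>\frac{r(d-1)\log k}{d^2}$, while dropping the requirement $a_1=k$ lets a singleton product (all $X_i=\{0^\infty\}$) make $h=0$, so the infimum over nonempty products is $0$, not $\frac{r(d-1)\log k}{d^2}$. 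The paper's proof is a one-liner: $|\mathcal{P}(\Delta_0,\times_{i=1}^{d-r}X_i)|=k$ for every nonempty product, $|\mathcal{P}(\Delta_n,\times_{i=1}^{d-r}X_i)|\ge 1$ for $n\ge1$, apply Theorem \ref{thm 3.1}(2); its implicit minimizer is an SFT presentation in which no symbol other than $0$ has an admissible successor (forbid every two-letter word except $00$), giving $a_1=k$ and $a_j=1$ for $j\ge2$. Both steps are coherent only if $\mathcal{P}(\Delta_n,\cdot)$ counts \emph{locally admissible} patterns, so that the $k-1$ successor-less symbols survive at the leaves of $\Delta_n$ and surjectivity of restriction (your monotonicity) fails. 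So the concrete gap is twofold: you are missing the degenerate extremal SFT, and you are missing the fact that assertion 1 requires the local-pattern counting convention -- under the global convention you adopted, the statement you set out to prove is false.
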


\begin{proof}
	\item[\bf 1.] Since $|\mathcal{P}(\Delta_0,\times_{i=1}^{d-r} X_i)|=k$ and $|\mathcal{P}(\Delta_n,\times_{i=1}^{d-r} X_i)|\geq 1$ for all $n\geq 1$, then the proof is completed by the Theorem \ref{thm 3.1} (2).
	\item[\bf 2.] It is clear that if $\times_{i=1}^{d-r}X_i$ is a subshift, then $|\mathcal{P}(\Delta_n,\times_{i=1}^{d-r} X_i)|\geq k$ for all $n\geq 0$. Let $X_1=\cdots =X_{d-r}=\{0^{\infty},...,(k-1)^{\infty}\}$ be an subshift that contains only $k$ points, then $\times_{i=1}^{d-r} X_i$ is also an subshift that contains only $k$ points. Thus, $|\mathcal{P}(\Delta_n,\times_{i=1}^{d-r} X_i)|=k$ for all $n\geq 0$, then the proof is completed by the Theorem \ref{thm 3.1} (2). 
	\item[\bf 3.] For $d=2$ and $r=1$, if $A$ is a transition matrix of $X_1$, then $|A^n|=k$ for all $n\geq 1$ if and only if $A$ is a permutation matrix or union of permutation matrices with one sink. Then the irreducibility of $A$ implies $A$ is a $k$-permutation matrix. For $1<r<d$, it is easy to see that if $X_1,...,X_{d-r}$ are $k$-permutation matrices, then $|\mathcal{P}(\Delta_n,\times_{i=1}^{d-r} X_i)|=k$ for all $n\geq 0$. Conversely, if $|\mathcal{P}(\Delta_n,\times_{i=1}^{d-r} X_i)|=k$ for all $n\geq 0$, then for the root of $(d-r)$-tree labels $i\in\{0,...,k-1\}$, we have $\mathcal{P}(\Delta_0,\times_{i=1}^{d-r} X_i)=\{0,...,k-1\}$. This implies that if two global patterns $t_1$ and $t_2$ on $(d-r)$-tree with the same root symbol, then $t_1=t_2$. This implies that the symbol $i$ in each $X_1,...,X_{d-r}$ goes to unique $j\in \{0,1,...,k-1 \}$. Then the irreducibility implies that $X_{r+1},...,X_d$ are determined by $k$-permutation matrices. The proof is complete.
\end{proof}	

\subsection{Application: surface entropy}\label{sec surface}

Let $F_{n,m}=[1,n]\times \lbrack 1,m]\subseteq \mathbb{N}^{2}$ and $X$ be a
shift on $\mathbb{N}^{2}$, the \emph{surface entropy }of $X$ with
eccentricity $\alpha $ is 
\[
h_{s}(X,\alpha )=\sup_{\{(x_{n},y_{n})\}\in \Gamma _{\alpha
}}\limsup\limits_{n\rightarrow \infty }S_{X}(x_{n},y_{n})\text{,}
\]%
where 
\[
S_{X}(x_{n},y_{n})=\frac{\log \left\vert \mathcal{P}(F_{x_{n},y_{n}},X)%
	\right\vert -x_{n}y_{n}h(X)}{x_{n}+y_{n}}\text{,}
\]%
and $\Gamma _{\alpha }=\{\{(x_{n},y_{n})\}\in \left( \mathbb{N}^{2}\right) ^{%
	\mathbb{N}}:\frac{y_{n}}{x_{n}}\rightarrow \alpha $ and $x_{n}\rightarrow
\infty \}$. The main purpose of this study is to understand the `linear term'
of the complexity function $\log \left\vert \mathcal{P}(F_{x_{n},y_{n}},X)%
\right\vert $. This concept was first introduced by Pace \cite%
{pace2018surface}, wherein the author obtains the rigorous formula for $\mathbb{Z}$
SFTs and a lot of interesting properties for $\mathbb{Z}^{2}$ SFTs. Some
related results for the surface entropy may also be found in \cite%
{callard2021computational, meyerovitch2009growth}. Below we introduce the
concept of the $k$-multiplicative integer system ($k$-MIS) on $\mathbb{N}^{d}
$, and the main aim of this section is to combine the results of the surface
entropy for $k$-MIS and the full axial extension to derive the
surface entropy for the complexity function $\log\mathcal{P}(\Delta_n,E^{d-1}\times \Omega )$.

Let $\Omega \subseteq \mathcal{A}^{\mathbb{N}}$ be a $\mathbb{N}$ subshift
and $\mathbf{p}_{1},\ldots ,\mathbf{p}_{k-1}\in \mathbb{N}^{d}$, $d\geq 1$.
The $k$\emph{-multiplicative integer system} with respect to $\Omega $ is
defined as 
\begin{equation}
X_{\Omega }^{\mathbf{p}_{1},\ldots ,\mathbf{p}_{k-1}}=\{(x_{\mathbf{i}})_{%
	\mathbf{i}\in \mathbb{N}^{d}}\in \mathcal{A}^{\mathbb{N}^{d}}:x_{\mathbf{i}%
}x_{\mathbf{ip}_{1}}\cdots x_{\mathbf{ip}_{k-1}}\in \Omega _{k}\text{ }%
\forall \mathbf{i}\in \mathbb{N}^{d}\}\text{,}  \label{3}
\end{equation}%
where $\Omega _{k}$ denotes the set of admissible blocks of $\Omega$ with length $1<k\in 
\mathbb{N}$. The $X_{\Omega }^{\mathbf{p}_{1},\ldots ,\mathbf{p}_{k-1}}$ in (%
\ref{3}) is the multidimensional version of the multiplicative shift $%
X_{\Omega }^{p}$ defined by \cite{fan2012level, kenyon2012hausdorff}. 
\[
X_{\Omega }^{p}=\{(x_{k})_{k=1}^{\infty }\in \mathcal{A}^{\mathbb{N}%
}:(x_{ip^{l}})_{l=0}^{\infty }\in \Omega \text{ for all }i\text{, }p\nmid i\}%
\text{.}
\]%
The $k$-MIS or multiplicative shifts have been studied in depth because they
have a close relationship with multifractal analysis of the multiple ergodic
theory (cf. \cite{fan2014some}). We refer the reader to \cite{fan2014some,assani2014survey} for more details on the multiple ergodic theory and find
the complete bibliography. In \cite{ban2022boundary}, the authors consider the surface entropy
of $2$-MISs on $\mathbb{N}^{d},d\geq 1$.

\begin{theorem}[Theorem 3.3 with $d=1$ \cite{ban2022boundary}]\label{boundary}
	\item[1.] For a sequence $\{a_n\}_{n=1}^\infty$ with $\lim_{n\to\infty}a_n=\infty$, we have
	\begin{equation*}
	\log \left|\mathcal{P}\left(\mathbb{Z}_{x_n},X^{ p}_{\Omega}\right)\right|-x_nh=\left(1-\frac{1}{p}\right)^2\sum_{i=r_n+1}^{\infty}\frac{x_n}{p^{i-1}}\log |\Omega_i|+o\left(a_nx_n\right),
	\end{equation*}	
	where $r_n=\left\lfloor \frac{\log x_n}{\log p} \right\rfloor$.
	\item[2.] If $x_n=p^n\pm k$ and $1\leq k \leq p$, then we have 
	\begin{equation*}
	\log \left|\mathcal{P}\left(\mathbb{Z}_{x_n},X^{ p}_{\Omega}\right)\right|-x_nh=O\left(n\right).	
	\end{equation*}
\end{theorem}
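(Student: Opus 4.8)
The plan is to turn the count $|\mathcal P(\mathbb Z_{x_n},X^{p}_{\Omega})|$ into an explicit product over the multiplicative fibres of $\{1,\dots,x_n\}$ and then to compare $\log|\mathcal P(\mathbb Z_{x_n},X^{p}_{\Omega})|$ with $x_nh$ fibre by fibre. Write $N=x_n$. For each $i$ with $p\nmid i$, the \emph{fibre} $C_i=\{i,ip,ip^2,\dots\}\cap[1,N]$ is exactly the set of coordinates on which the defining rule of $X^{p}_{\Omega}$ couples symbols, and as $i$ ranges over integers not divisible by $p$ the fibres partition $[1,N]$. The rule says that the word read along each $C_i$ must be an admissible block of $\Omega$, and distinct fibres impose independent constraints, so
\[
|\mathcal P(\mathbb Z_N,X^{p}_{\Omega})|=\prod_{\ell\ge1}|\Omega_\ell|^{c_\ell},
\]
where $c_\ell$ is the number of fibres of cardinality exactly $\ell$. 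This reduces the statement to a purely combinatorial estimate of the numbers $c_\ell=c_\ell(N)$.

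Next I would count fibres. A coordinate $j\le N$ sits at depth $v_p(j)+1$ in its fibre ($v_p$ the $p$-adic valuation), so the number of coordinates of depth $\ell$, equivalently the number of fibres of length at least $\ell$, is $A_\ell=\lfloor N/p^{\ell-1}\rfloor-\lfloor N/p^{\ell}\rfloor$, and $c_\ell=A_\ell-A_{\ell+1}$. Since $A_{r_n+1}\ge1$ while $A_{r_n+2}=0$, the largest depth occurring is $r_n+1$, so the product is finite. A summation by parts then yields the clean form
\[
\log|\mathcal P(\mathbb Z_N,X^{p}_{\Omega})|=\sum_{\ell=1}^{r_n+1}A_\ell\,\delta_\ell,\qquad \delta_\ell:=\log\frac{|\Omega_\ell|}{|\Omega_{\ell-1}|},
\]
with $|\Omega_0|=1$; the advantage of working with $\delta_\ell$ is that $0\le\delta_\ell\le\log|\mathcal A|$ is bounded, whereas $\log|\Omega_\ell|$ grows linearly. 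From $c_\ell/N\to(1-1/p)^2p^{-(\ell-1)}$ together with dominated convergence (using $\log|\Omega_\ell|\le\ell\log|\mathcal A|$ against the geometric weight) one obtains $h=(1-1/p)^2\sum_{\ell\ge1}p^{-(\ell-1)}\log|\Omega_\ell|$, which in the same notation reads $Nh=\sum_{\ell\ge1}\tilde A_\ell\,\delta_\ell$ with $\tilde A_\ell=N(1-1/p)p^{-(\ell-1)}$ the continuous surrogate of $A_\ell$.

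Subtracting gives
\[
\log|\mathcal P(\mathbb Z_N,X^{p}_{\Omega})|-Nh=\sum_{\ell=1}^{r_n+1}(A_\ell-\tilde A_\ell)\delta_\ell-\sum_{\ell\ge r_n+2}\tilde A_\ell\,\delta_\ell .
\]
Writing $A_\ell-\tilde A_\ell=G_{\ell-1}-G_\ell$ with $G_k=\lfloor N/p^k\rfloor-N/p^k\in(-1,0]$, a second summation by parts turns the first sum into $\sum_{k=1}^{r_n}G_k(\delta_{k+1}-\delta_k)$ plus boundary terms, while the tail sum is $O(1)$ because $\delta_\ell$ is bounded and $Np^{-(r_n+1)}<1$. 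Re-expressing the boundary and tail contributions through $\log|\Omega_i|$ (undoing the earlier summation by parts) is what recovers the announced term $(1-1/p)^2\sum_{i>r_n}Np^{-(i-1)}\log|\Omega_i|$, and the leftover floor-fluctuation piece $\sum_{k\le r_n}G_k(\delta_{k+1}-\delta_k)$, bounded in absolute value by $\log|\mathcal A|\cdot r_n=O(\log N)$, is $o(a_nx_n)$ for every $a_n\to\infty$. The main obstacle is exactly this last error analysis: the gap between the discrete counts $A_\ell$ and their surrogates $\tilde A_\ell$ is governed by the fractional parts $G_k$ of $N/p^k$, and one must process the second differences of $\ell\mapsto\log|\Omega_\ell|$ through the summation by parts rather than term by term, since a naive bound would destroy the cancellation that keeps the remainder of lower order.

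For part 2 I would exploit that when $x_n=p^n\pm k$ with $1\le k\le p$ the floors $\lfloor x_n/p^{\ell}\rfloor$, hence all the $A_\ell$ and the fractional parts $G_k$, admit an explicit closed form and $r_n=n$. Evaluating the difference directly then makes every summand $O(1)$ and the number of nonzero depths $O(n)$, so $\log|\mathcal P(\mathbb Z_{x_n},X^{p}_{\Omega})|-x_nh=O(n)$; this is simply the sharp, deterministic form of the general $O(\log x_n)$ bound underlying part 1.
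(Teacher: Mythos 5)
First, a point of comparison: this paper contains no proof of Theorem \ref{boundary} at all --- it is quoted verbatim from \cite{ban2022boundary} (their Theorem 3.3 with $d=1$) and used only as an input to the surface-entropy computation of Section \ref{sec surface}. So your proposal must be judged on its own merits, and on its own merits it is correct. The fibre decomposition you use --- $\{1,\dots,x_n\}$ splits into the geometric fibres $C_i=\{i,ip,ip^2,\dots\}\cap[1,x_n]$ with $p\nmid i$, distinct fibres carry independent constraints, hence $|\mathcal{P}(\mathbb{Z}_{x_n},X_\Omega^p)|=\prod_\ell|\Omega_\ell|^{c_\ell}$ --- is exactly the mechanism (going back to Kenyon--Peres--Solomyak) on which the cited paper's own argument also rests, and your bookkeeping checks out: $A_\ell=\lfloor x_n/p^{\ell-1}\rfloor-\lfloor x_n/p^{\ell}\rfloor$ counts fibres of length at least $\ell$, the maximal fibre length is $r_n+1$, Abel summation gives $\log|\mathcal{P}(\mathbb{Z}_{x_n},X_\Omega^p)|=\sum_{\ell=1}^{r_n+1}A_\ell\delta_\ell$ with $0\le\delta_\ell\le\log|\mathcal{A}|$, and $x_nh=\sum_{\ell\ge1}\tilde A_\ell\delta_\ell$. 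Since $A_\ell-\tilde A_\ell=G_{\ell-1}-G_\ell$ with $|G_k|<1$, the fluctuation sum is $O(r_n)=O(\log x_n)$ (term-by-term bounding already suffices here; your second summation by parts is harmless but not needed), and the tail $\sum_{\ell\ge r_n+2}\tilde A_\ell\delta_\ell$ is $O(1)$ because $x_np^{-(r_n+1)}<1$. Thus you in fact prove the stronger uniform statement $\log|\mathcal{P}(\mathbb{Z}_{x_n},X_\Omega^p)|-x_nh=O(\log x_n)$ for \emph{any} $x_n\to\infty$; since the announced main term is itself $O(\log x_n)$ (using $x_n\le p^{r_n+1}$ and $\log|\Omega_i|\le i\log|\mathcal{A}|$), both parts follow, and part 2 needs no use of the special form $x_n=p^n\pm k$.

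Two imprecisions, neither fatal at the stated level of accuracy. First, your claim that undoing the summation by parts on the boundary and tail ``recovers the announced term'' is not literally true: the exact identity is
\[
-\sum_{\ell\ge r_n+2}\tilde A_\ell\,\delta_\ell
=-\Bigl(1-\tfrac1p\Bigr)^{2}\sum_{\ell\ge r_n+2}\frac{x_n}{p^{\ell-1}}\log|\Omega_\ell|
+\frac{x_n\bigl(1-\tfrac1p\bigr)}{p^{r_n+1}}\log|\Omega_{r_n+1}|,
\]
which enters with the opposite sign to, and a different starting index from, the announced term $+\bigl(1-\tfrac1p\bigr)^{2}\sum_{i\ge r_n+1}x_np^{-(i-1)}\log|\Omega_i|$. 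The two differ by a further $O(\log x_n)$, which the error term $o(a_nx_n)$ absorbs, so your conclusion stands --- but you should state the identification as holding up to $O(\log x_n)$ rather than exactly. (Relatedly, part 1 as stated is quite weak: its error term dominates its main term, so even the trivial bound $\bigl|\log|\mathcal{P}(\mathbb{Z}_{x_n},X_\Omega^p)|-x_nh\bigr|\le x_n\log|\mathcal{A}|$ would verify it; the genuine content of your analysis is the $O(\log x_n)$ bound that drives part 2.) Second, a small detail: for $x_n=p^n-k$ with $1\le k\le p$ one has $r_n=n-1$, not $n$; this is immaterial since $O(\log x_n)=O(n)$ either way.
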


\begin{theorem}
	\item[1.] We have
	\begin{equation*}
	h\left(E^{d-1}\times X\right)=(d-1)^2\sum_{i=1}^\infty \frac{\log\left|\mathcal{P}\left(\mathbb{Z}_i,X\right)\right|}{d^{i+1}}.
	\end{equation*}
	\item[2.] For $d\geq 2$, let $\Omega$ be a subshift, we have 
	\begin{equation*}
	\log\left|\mathcal{P}\left(\Delta_n,E^{d-1}\times \Omega\right) \right|=\left|\Delta_n \right|h(E^{d-1}\times \Omega)+(d-1)^2\sum_{i=r_n+1}^{\infty}\frac{|\Delta_n|}{d^{i-1}}\log |\mathcal{P}(\mathbb{Z}_i,\Omega)|+o\left(a_n|\Delta_n|\right),
	\end{equation*}
	where $r_n=\left\lfloor \frac{\log |\Delta_n|}{\log d} \right\rfloor$ and $\{a_n\}_{n=1}^\infty$ is a sequence with $\lim_{n\to\infty}a_n=\infty$.
	
	 In particular, when $d=2$, we have
	 \begin{equation*}
	 \log\left|\mathcal{P}\left(\Delta_n,E\times \Omega\right) \right|=\left|\Delta_n \right|h(E\times \Omega)+O(n).
	 \end{equation*}
\end{theorem}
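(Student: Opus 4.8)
The plan is to read both assertions off the exact counting identity obtained in the proof of Theorem~\ref{thm 3.1}(2), and then to transport the surface-entropy expansion for multiplicative integer systems recorded in Theorem~\ref{boundary}.

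For part~1 I would merely specialize Theorem~\ref{thm 3.1}(2) to the case $r=d-1$, for which $\times_{i=1}^{d-r}X_i$ collapses to the single one-sided shift $X$ sitting on a $1$-tree. The radius-$(j-1)$ ball of a $1$-tree is the chain of $j$ vertices $\{\epsilon,f,\dots,f^{j-1}\}$, so $\left|\mathcal{P}(\Delta_{j-1},X)\right|=\left|\mathcal{P}(\mathbb{Z}_j,X)\right|$. Inserting $r=d-1$ into the formula $r(d-1)\sum_{j\geq1}d^{-(j+1)}\log\left|\mathcal{P}(\Delta_{j-1},X)\right|$ and relabelling $i=j$ produces exactly $(d-1)^2\sum_{i\geq1}d^{-(i+1)}\log\left|\mathcal{P}(\mathbb{Z}_i,X)\right|$; no asymptotics are involved.

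For part~2 I would begin from the closed product formula in the proof of Theorem~\ref{thm 3.1}(2), which for $r=d-1$ and a single factor $\Omega$ becomes
\begin{equation*}
\log\left|\mathcal{P}\left(\Delta_n,E^{d-1}\times\Omega\right)\right|=\log\left|\mathcal{P}(\mathbb{Z}_{n+1},\Omega)\right|+(d-1)\sum_{j=1}^{n}d^{\,n-j}\log\left|\mathcal{P}(\mathbb{Z}_j,\Omega)\right|.
\end{equation*}
Combinatorially this says that $\Delta_n$ splits into independent $f_d$-fibers — the remaining $d-1$ directions carry the full shift and so impose no constraint — with a single fiber of length $n+1$ and exactly $(d-1)d^{\,n-m}$ fibers of length $m$ for each $1\leq m\leq n$. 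The crucial point is that this length profile coincides, to leading order, with the profile of multiplicative $d$-chains $\{i,id,id^2,\dots\}$ inside $\{1,\dots,|\Delta_n|\}$: a chain of length $m$ requires $|\Delta_n|/d^{m}<i\leq|\Delta_n|/d^{m-1}$ with $d\nmid i$, so the number of such $i$ is $\left(1-\frac{1}{d}\right)^2|\Delta_n|\,d^{-(m-1)}$ up to lower-order fluctuations, and using $|\Delta_n|=(d^{n+1}-1)/(d-1)$ this equals $(d-1)d^{\,n-m}$. Hence $\log\left|\mathcal{P}(\Delta_n,E^{d-1}\times\Omega)\right|$ agrees with $\log\left|\mathcal{P}(\mathbb{Z}_{|\Delta_n|},X^{d}_{\Omega})\right|$ up to a manageable error, and applying Theorem~\ref{boundary}(1) with $x_n=|\Delta_n|$ and $p=d$ delivers the main term $|\Delta_n|h(E^{d-1}\times\Omega)$ together with the stated tail over $i>r_n$, where $r_n=\lfloor\log|\Delta_n|/\log d\rfloor$.

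I expect the matching in the previous paragraph to be the only real obstacle: the tree fiber counts are exact, whereas the multiplicative chain counts carry number-theoretic noise coming from the coprimality condition $d\nmid i$ and from the rounding of $|\Delta_n|/d^{m}$, so I must estimate the accumulated discrepancy $\sum_m\bigl(\#\{\text{length-}m\text{ fibers}\}-\#\{\text{length-}m\text{ chains}\}\bigr)\log\left|\mathcal{P}(\mathbb{Z}_m,\Omega)\right|$ and verify that it is $o(a_n|\Delta_n|)$, for which the bound $\log\left|\mathcal{P}(\mathbb{Z}_m,\Omega)\right|=O(m)$ is the essential input. The case $d=2$ is clean and sharp: here $|\Delta_n|=2^{n+1}-1$ has exactly the special shape $p^{N}-k$ handled in Theorem~\ref{boundary}(2), which returns the correction $O(n)$ at once; equivalently, one may read $O(n)$ directly off the displayed formula, since $\log\left|\mathcal{P}(\mathbb{Z}_j,\Omega)\right|=O(j)$ forces the gap between the truncated sum and its infinite geometric completion to be $O(n)$.
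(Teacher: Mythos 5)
Your proposal is correct and is essentially the paper's own argument: part 1 specializes Theorem \ref{thm 3.1}(2) to $r=d-1$ and identifies $\mathcal{P}(\Delta_{j-1},X)$ with $\mathcal{P}(\mathbb{Z}_j,X)$ on the $1$-tree, while part 2 transports Theorem \ref{boundary}(1) with $x_n=|\Delta_n|$ and $p=d$ (and invokes Theorem \ref{boundary}(2) when $d=2$, since $|\Delta_n|=2^{n+1}-1$), exactly as the paper does, with the fiber-versus-chain correspondence that the paper leaves implicit spelled out. The only place you do more work than necessary is the discrepancy estimate you flag as the ``real obstacle'': for $x_n=|\Delta_n|=1+d+\cdots+d^n$ the multiplicative-chain length profile in $\{1,\dots,x_n\}$ is \emph{exactly} one chain of length $n+1$ and $(d-1)d^{n-m}$ chains of each length $1\le m\le n$ (since $\lfloor x_n/d^k\rfloor=1+d+\cdots+d^{n-k}$, the floors telescope with no rounding error), which matches the tree-fiber profile identically, so the number-theoretic noise you plan to control is actually zero --- although your fallback bound of $O(1)$ error per length class times $\log|\mathcal{P}(\mathbb{Z}_m,\Omega)|=O(m)$, giving $O(n^2)=o(a_n|\Delta_n|)$, would also close the argument.
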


\begin{proof}
	\item[\bf 1.] By Theorem \ref{thm 3.1} with $r=d-1$, we have
	\[
	h\left(E^{d-1}\times X\right)=(d-1)^2\sum_{i=1}^\infty \frac{\log\left|\mathcal{P}(\Delta_{i-1}, X)\right|}{d^{i+1}}.
	\]
	Since $\mathcal{P}(\Delta_{i-1},X)=\mathcal{P}(\mathbb{Z}_i,X)$ for all $i\geq 1$, the proof is thus complete.
	\item[\bf 2.] When $d\geq 2$, the proof is complete by taking $x_n=|\Delta_n|$ and $p=d$ in Theorem \ref{boundary} (1). When $d=2$, since $|\Delta_n|=2^{n+1}-1$, the proof is complete by applying Theorem \ref{boundary} (2).
\end{proof}

\section{General trees}\label{sec 5}

In this section, we study the entropy of the full axial extension on general trees. Let $d\geq 2$, the $d$-tree $\mathcal{T}_d=\langle f_1,...,f_d\rangle$ is a semigroup which is generated by $d$ generators $f_1,...,f_d$. For any $g\in\mathcal{T}_d$ and a binary matrix $M=[M_{i,j}]_{d\times d}$, we say $g$ is \emph{admissible} with respect to $M$ if $g=f_{a_1}\cdots f_{a_k},k\geq 1,a_1,...,a_k\in\{1,...,d\}$ satisfies $M_{a_j,a_{j+1}}=1$ for all $1\leq j \leq k-1$. Define the \emph{Markov-Cayley tree} $\mathcal{T}$ with the adjacency matrix $M=[M_{i,j}]_{d\times d}$ is the set $\left\{ g\in\mathcal{T}_d: g\mbox{ is admissible w.r.t. }M \right\}$. The \emph{golden-mean tree} $\mathcal{G}$ is defined by the adjacency matrix $M=\left[\begin{array}{cc}
	1 & 1 \\ 
	1 & 0%
\end{array}\right]$. That is, $\mathcal{G}=\left\{g\in \mathcal{T}_2: g\mbox{ is admissible w.r.t. }\left[\begin{array}{cc}
1 & 1 \\ 
1 & 0%
\end{array}\right] \right\}$.

The following result demonstrates that the values $h(E\times X)$ and $h(X\times E)$ are not general coincident.
\begin{theorem}[Golden-mean tree]\label{thm 5.1}
	If $X$ is a subshift, then the entropy of $E\times X$ and $X\times E$ on $\mathcal{G}$ are equal to $\frac{\log\left|\mathcal{P}(\mathbb{Z}_1,X)\right|}{\rho^3} +\frac{\log\left|\mathcal{P}(\mathbb{Z}_2,X)\right|}{\rho^2}$ and $\sum_{i=1}^\infty \frac{\log \left|\mathcal{P}(\mathbb{Z}_i,X)\right|}{\rho^{i+3}}$ respectively, where $\rho=\frac{1+\sqrt{5}}{2}$.
\end{theorem}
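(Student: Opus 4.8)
The plan is to mimic the combinatorial decomposition used in the proof of Theorem~\ref{thm 3.1}(2), but now adapted to the branching structure of the golden-mean tree $\mathcal{G}$ rather than the homogeneous $d$-tree $\mathcal{T}_d$. The key structural fact is that $\mathcal{G}$ has two types of vertices: those labelled by the generator $f_1$ (which has two children, one of each type, since $M_{1,1}=M_{1,2}=1$) and those labelled by $f_2$ (which has a single child of type $f_1$, since $M_{2,1}=1, M_{2,2}=0$). Writing $F_k$ for the Fibonacci-type counts of admissible words of length $k$, one has $|\mathcal{T}_k|\sim c\,\rho^{k}$ with $\rho=\frac{1+\sqrt5}{2}$, and $|\Delta_n|=\sum_{k=0}^n|\mathcal{T}_k|\sim \frac{c\rho}{\rho-1}\rho^{n}$. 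First I would set up the entropy as $h=\lim_{n\to\infty}\log|\mathcal{P}(\Delta_n,\cdot)|/|\Delta_n|$ and reduce the problem to counting, for each vertex $g$ of $\mathcal{G}$, the size of the maximal subtree hanging below $g$ that is forced to carry a copy of the $X$-axis (as opposed to the free $E$-axis).

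The core step is to carry out the analogue of the partition argument from Theorem~\ref{thm 3.1}(2) separately for the two orderings $E\times X$ and $X\times E$. For $E\times X$ the $E$-shift sits on the $f_1$-axis and $X$ on the $f_2$-axis; for $X\times E$ the roles are swapped. In each case, the projection $\mathcal{P}(\Delta_n,\cdot)$ factors as a product over ``$X$-fibers'': maximal paths in $\mathcal{G}$ all of whose steps lie along the $X$-generator. Each such fiber of length $i$ contributes a factor $|\mathcal{P}(\mathbb{Z}_i,X)|$ (using $\mathcal{P}(\Delta_{i-1},X)=\mathcal{P}(\mathbb{Z}_i,X)$ since a single-axis restriction is just a one-dimensional window), while the $E$-generator contributes free choices that, being a full shift, do not constrain and hence only rescale the vertex count. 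I would count, as $n\to\infty$, the asymptotic number $N_i(n)$ of $X$-fibers of length exactly $i$ inside $\Delta_n$, divide by $|\Delta_n|$, and identify the limiting density $\lim_{n}N_i(n)/|\Delta_n|$ as the coefficient $\rho^{-(i+\cdot)}$ appearing in the theorem. The two different exponents ($\rho^{-3},\rho^{-2}$ for $E\times X$ versus $\rho^{-(i+3)}$ for $X\times E$) arise precisely because the $f_2$-generator has only one child: along the $X$-axis the fibers are truncated after a single step in the $E\times X$ case (so only lengths $1$ and $2$ survive with nonzero density), whereas in the $X\times E$ case the $X$-fibers can be arbitrarily long, producing the full geometric series $\sum_{i\ge1}\rho^{-(i+3)}$.

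The main obstacle I anticipate is the bookkeeping of the fiber-length densities, i.e.\ rigorously computing $\lim_{n\to\infty}N_i(n)/|\Delta_n|$ for each $i$ and verifying that the two cases yield the asymmetric formulas stated. This requires tracking how $f_1$- and $f_2$-labelled vertices propagate through $\Delta_n$ via the Fibonacci recursion, and showing that the boundary contributions (fibers cut off by the level-$n$ truncation) vanish in the limit after dividing by $|\Delta_n|$. A clean way to manage this is to introduce generating-function or transfer-matrix bookkeeping for the pair $(\text{vertex type},\text{current fiber length})$, diagonalize using the eigenvalue $\rho$ of $M$, and read off the stationary densities from the Perron eigenvector, exactly in the spirit of the appeal to Theorem~4.5.12 of \cite{LM-1995} made earlier in the proof of Theorem~\ref{thm 2.1}. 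Once the density $\lim_n N_i(n)/|\Delta_n|$ is identified, the asymmetry between $h(E\times X)$ and $h(X\times E)$ follows immediately, and the interchange of limit and infinite sum is justified by the uniform bound $\log|\mathcal{P}(\mathbb{Z}_i,X)|\le i\log|\mathcal{A}|$ together with the geometric decay $\rho^{-i}$, giving absolute convergence.
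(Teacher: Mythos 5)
Your proposal is correct and follows essentially the same route as the paper's proof: both decompose $\Delta_n^{\mathcal{G}}$ into maximal $X$-fibers (copies of $\mathbb{Z}_i$), factor the pattern count over these fibers since the $E$-direction imposes no constraints, and identify the limiting fiber-length densities $\rho^{-3},\rho^{-2}$ (for $E\times X$, where $M_{2,2}=0$ caps fibers at length $2$) and $\rho^{-(i+3)}$ (for $X\times E$) using the golden-ratio asymptotics of $|\mathcal{T}_n^{\mathcal{G}}|$ and $|\Delta_n^{\mathcal{G}}|$. The only cosmetic difference is that the paper establishes the fiber counts by explicit induction and Fibonacci identities ($|\Delta_n^{\mathcal{G}}|=a_{n+3}-2$, $|\mathcal{T}_{n-1}^{\mathcal{G}}|=a_n$) rather than your transfer-matrix/Perron-eigenvector bookkeeping, which amounts to the same computation.
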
	

\begin{proof}
	For the entropy of $E\times X$, we claim that for $n\geq 2$, $\Delta_n^{\mathcal{G}}$ can be divided into $a_n$ many $\mathbb{Z}_1$ lattices and $\frac{\sum_{i=1}^{n+1} a_i-a_n}{2}$ many $\mathbb{Z}_2$ lattices in which
	\[
	\left|\mathcal{P}\left(\Delta^{\mathcal{G}}_{n},E\times X\right)\right|=\left|\mathcal{P}(\mathbb{Z}_1,X)\right|^{a_n}\left|\mathcal{P}\left(\mathbb{Z}_2,X\right)\right|^{\frac{\sum_{i=1}^{n+1} a_i-a_n}{2}},
	\]
	 where $\Delta^{\mathcal{G}}_n=\{g\in\mathcal{G}: |g|\leq n\}$ is the $n$-block of the golden-mean tree and $a_{n+2}=a_{n+1}+a_{n}$ with $a_1=1$ and $a_2=2$. Indeed, due to the rule of $E\times X$, we divide the $\Delta^{\mathcal{G}}_n$ into $|\mathcal{T}_{n-1}^\mathcal{G}|$ many $\mathbb{Z}_1$ lattices and all the other lattices of the partition of $\Delta^{\mathcal{G}}_n$ are $\mathbb{Z}_2$ in which
	 \[
	 \left|\mathcal{P}\left(\Delta^{\mathcal{G}}_{n},E\times X\right)\right|=\left|\mathcal{P}(\mathbb{Z}_1,X)\right|^{|\mathcal{T}_{n-1}^\mathcal{G}|}\left|\mathcal{P}\left(\mathbb{Z}_2,X\right)\right|^{\frac{\left|\Delta_n^{\mathcal{G}}\right|-\left|\mathcal{T}_{n-1}^\mathcal{G}\right|}{2}},
	 \]
	 where $\mathcal{T}_n^\mathcal{G}=\{g\in\mathcal{G}: |g|= n\}$. The proof of claim is completed by $\left|\Delta_n^{\mathcal{G}}\right|=\sum_{i=1}^{n+1}a_i$ and $\left|\mathcal{T}_{n-1}^\mathcal{G}\right|=a_n$. By the construction of the partitions based on $X\times E$ (See the left hand side of Figure \ref{partitiongm} for the partitions of $\Delta_1^\mathcal{G}$, $\Delta_2^\mathcal{G}$ and $\Delta_3^\mathcal{G}$ based on $E\times X$.), thus we have 
	\begin{figure} 
		\centering 
		\includegraphics[width=0.8\textwidth]{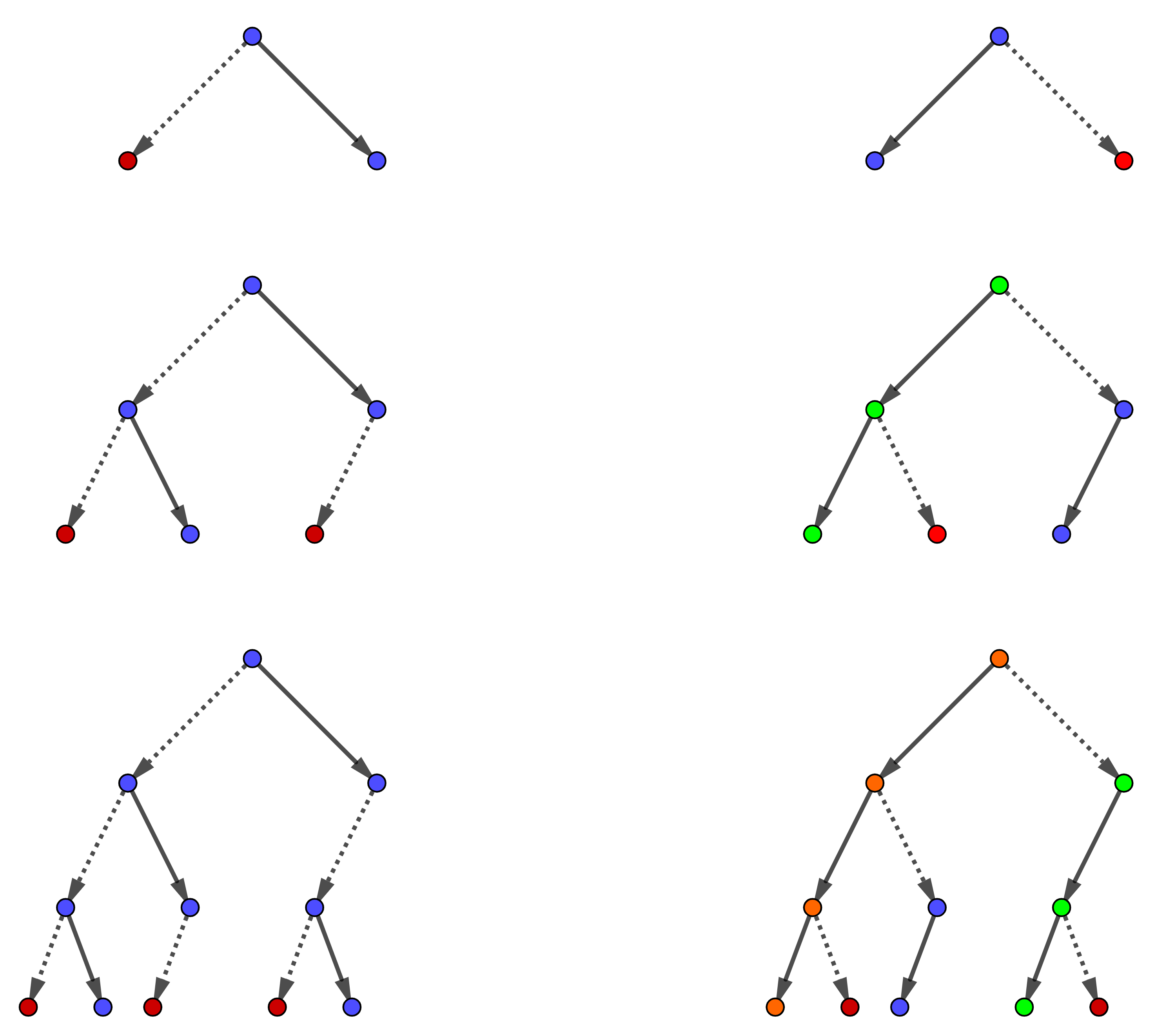} 
		\caption{The left (resp. right) hand side illustrate the partition of $\Delta_1^\mathcal{G}$, $\Delta_2^\mathcal{G}$ and $\Delta_3^\mathcal{G}$ based on $E\times X$ (resp. $X\times E$).} 
		\label{partitiongm} 
	\end{figure}
	 \begin{align*}
	h\left(E\times X\right)&=\lim_{n\to \infty}\frac{\log\left|\mathcal{P}\left(\Delta^{\mathcal{G}}_{n},E\times X\right)\right|}{\left|\Delta^{\mathcal{G}}_n\right|}\\
	&=\lim_{n\to \infty}\frac{\log\left|\mathcal{P}(\mathbb{Z}_1,X)\right|^{a_n}\left|\mathcal{P}\left(\mathbb{Z}_2,X\right)\right|^{\frac{\sum_{i=1}^{n+1} a_i-a_n}{2}}}{a_1+a_2+\cdots +a_{n+1}}\\
	&=\lim_{n\to \infty}\frac{a_n\log\left|\mathcal{P}(\mathbb{Z}_1,X)\right|+\frac{1}{2}\left(\sum_{i=1}^{n+1} a_i-a_n\right)\log\left|\mathcal{P}\left(\mathbb{Z}_2,X\right)\right|}{a_1+a_2+\cdots +a_{n+1}}\\
	&=\lim_{n\to \infty}\frac{a_n\log\left|\mathcal{P}(\mathbb{Z}_1,X)\right|+\frac{1}{2}\left(a_{n+3}-2-a_n\right)\log\left|\mathcal{P}\left(\mathbb{Z}_2,X\right)\right|}{a_{n+3}-2}\\
	&=\lim_{n\to \infty}\frac{a_n\log\left|\mathcal{P}(\mathbb{Z}_1,X)\right|+\left(a_{n+1}-1\right)\log\left|\mathcal{P}\left(\mathbb{Z}_2,X\right)\right|}{a_{n+3}-2}\\
	&=\frac{1}{\rho^3}\log \left|\mathcal{P}(\mathbb{Z}_1,X)\right|+\frac{1}{\rho^2}\log\left|\mathcal{P}\left(\mathbb{Z}_2,X\right)\right|,
\end{align*}
where $\rho=\frac{1+\sqrt{5}}{2}$.

For the entropy of $X\times E$, we claim that for $n\geq 2$, $\Delta_n^{\mathcal{G}}$ can be divided into a $\mathbb{Z}_{n+1}$ lattice, a $\mathbb{Z}_n$ lattice and $a_{n-i}$ many $\mathbb{Z}_i$ lattices for all $1\leq i \leq n-1$ in which
\[
\left|\mathcal{P}\left(\Delta^{\mathcal{G}}_{n},X\times E\right)\right|=\left|\mathcal{P}\left(\mathbb{Z}_{n+1},X\right)\right|\left|\mathcal{P}\left(\mathbb{Z}_n,X\right)\right|\prod_{i=1}^{n-1}\left|\mathcal{P}\left(\mathbb{Z}_i,X\right)\right|^{a_{n-i}}.
\]
 Indeed, due to the rule of $E\times X$, for $n=2$, we divide the $\Delta_2^{\mathcal{G}}$ into a $\mathbb{Z}_3$ lattice, a $\mathbb{Z}_2$ lattice and $|\mathcal{T}_{0}^{\mathcal{G}}|=a_1=1$ many $\mathbb{Z}_1$ lattices in which
 \[
 \left|\mathcal{P}\left(\Delta^{\mathcal{G}}_{2},X\times E\right)\right|=\left|\mathcal{P}\left(\mathbb{Z}_{3},X\right)\right|\left|\mathcal{P}\left(\mathbb{Z}_2,X\right)\right|\left|\mathcal{P}\left(\mathbb{Z}_1,X\right)\right|^{a_1}.
 \]
 Assume the claim holds for $n=k$, that is, we divide $\Delta_k^{\mathcal{G}}$ into a $\mathbb{Z}_{k+1}$ lattice, a $\mathbb{Z}_k$ lattice and $a_{k-i}$ many $\mathbb{Z}_i$ lattices for all $1\leq i \leq k-1$ in which
 \[
 \left|\mathcal{P}\left(\Delta^{\mathcal{G}}_{k},X\times E\right)\right|=\left|\mathcal{P}\left(\mathbb{Z}_{k+1},X\right)\right|\left|\mathcal{P}\left(\mathbb{Z}_k,X\right)\right|\prod_{i=1}^{k-1}\left|\mathcal{P}\left(\mathbb{Z}_i,X\right)\right|^{a_{k-i}}.
 \]
 Since the $\mathbb{Z}_i$ of the partition of $\Delta_k^{\mathcal{G}}$ can be extended to the $\mathbb{Z}_{i+1}$ of the partition of $\Delta_{k+1}^{\mathcal{G}}$ for all $1\leq i\leq k+1$, and the number of $\mathbb{Z}_1$ of the partition of $\Delta_j^{\mathcal{G}}$ is $a_{j-2}$ for all $j\geq 2$. Then by assumption of claim, the $\Delta_{k+1}^{\mathcal{G}}$ can be divided into a $\mathbb{Z}_{k+2}$ lattice, a $\mathbb{Z}_{k+1}$ lattice and $a_{k+1-i}$ many $\mathbb{Z}_i$ lattices for all $1\leq i \leq k$ in which
 \[
 \left|\mathcal{P}\left(\Delta^{\mathcal{G}}_{k},X\times E\right)\right|=\left|\mathcal{P}\left(\mathbb{Z}_{k+2},X\right)\right|\left|\mathcal{P}\left(\mathbb{Z}_{k+1},X\right)\right|\prod_{i=1}^{k}\left|\mathcal{P}\left(\mathbb{Z}_i,X\right)\right|^{a_{k+1-i}}.
 \] 
  The proof of claim is complete. By the construction of the partitions based on $X\times E$ (See the right hand side of Figure \ref{partitiongm} for the partitions of $\Delta_1^\mathcal{G}$, $\Delta_2^\mathcal{G}$ and $\Delta_3^\mathcal{G}$ based on $X\times E$.), thus we have 
\begin{align*}
	h\left(X\times E\right)&=\lim_{n\to \infty}\frac{\log\left|\mathcal{P}\left(\Delta^{\mathcal{G}}_{n},X\times E\right)\right|}{\left|\Delta^{\mathcal{G}}_n\right|}\\
	&=\lim_{n\to \infty}\frac{\log\left|\mathcal{P}\left(\mathbb{Z}_{n+1},X\right)\right|\left|\mathcal{P}\left(\mathbb{Z}_n,X\right)\right|\prod_{i=1}^{n-1}\left|\mathcal{P}\left(\mathbb{Z}_i,X\right)\right|^{a_{n-i}}}{a_1+a_2+\cdots +a_{n+1}}\\
	&=\lim_{n\to \infty}\frac{\sum_{i=1}^{n-1}a_{n-i}\log\left|\mathcal{P}\left(\mathbb{Z}_i,X\right)\right|}{a_{n+3}-2}\\
	&=\sum_{i=1}^\infty \frac{\log \left|\mathcal{P}\left(\mathbb{Z}_i,X\right)\right|}{\rho^{i+3}}.
\end{align*}	
The proof is complete.	
\end{proof}	

\begin{remark}
We remark that if $X_A$ is a full shift with $A=\left[\begin{matrix}
1&1\\
1&1
\end{matrix} \right]$, then $\left|\mathcal{P}\left(\mathbb{Z}_i,X\right)\right|=2^i$ for all $i\geq 1$. This implies $h(E\times X)=h(X\times E)=h(X)=\log 2$. However, the equalities do not hold in general. If we let $X=X_A$ be an SFT with $A=\left[\begin{matrix}
1&0\\
0&1
\end{matrix} \right]$, then $\left|\mathcal{P}\left(\mathbb{Z}_i,X\right)\right|=2$ for all $i\geq 1$. This implies $h(E\times X)=(\frac{1}{\rho^3}+\frac{1}{\rho^2})\log 2$, $h(X\times E)=\frac{1}{\rho^2}\log 2$ and $h(X)=0$.
\end{remark}

In the following , we consider $h(E\times X)$ and $h(X\times E)$ on the Markov-Cayley trees.
	
\begin{theorem}[General trees]\label{thm general}	
	\item[1.] If the Markov-Cayley tree $\mathcal{T}$ is a subtree of $\mathcal{T}_2$ satisfies $\gamma >1$ where $\gamma=\lim_{n\to\infty}\frac{|\mathcal{T}_{n+1}|}{|\mathcal{T}_n|}$ and $\mathcal{T}_n=\{g\in \mathcal{T}: |g|=n\}$, then $h(E\times X)>h(X)$ and $h(X\times E)>h(X)$ when $X$ is not a full shift.
	\item[2.] There is a Markov-Cayley tree $\mathcal{T}$ that satisfies $\gamma=1$ such that $h(E\times X)=h(X)$ for all $X=X_A$ that is an SFT with primitive transition matrix $A$.
\end{theorem}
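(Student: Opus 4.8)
The plan is to reduce both parts to the same ``axial chain'' partition used in the proof of Theorem~\ref{thm 5.1}, and then to compare the resulting weighted complexity sum against the one-dimensional entropy $h(X)=\lim_{\ell\to\infty}\frac{\log|\mathcal{P}(\mathbb{Z}_\ell,X)|}{\ell}$. Throughout write $c_\ell:=|\mathcal{P}(\mathbb{Z}_\ell,X)|$. First I would set up the partition for $E\times X$: since the generator $f_1$ carries the full shift $E$, the constraint $X$ is imposed independently along each maximal $f_2$-chain of $\mathcal{T}$, and distinct $f_2$-chains are uncoupled. Hence, if $N_\ell(n)$ denotes the number of maximal $f_2$-chains of length $\ell$ inside $\Delta_n$, then
\[
\left|\mathcal{P}(\Delta_n,E\times X)\right|=\prod_{\ell\ge 1}c_\ell^{\,N_\ell(n)},\qquad \sum_{\ell\ge 1}\ell\,N_\ell(n)=|\Delta_n|,
\]
the second identity holding because the chains partition the vertices of $\Delta_n$. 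The same applies to $X\times E$ with $f_1$-chains; this generalizes the two computations in Theorem~\ref{thm 5.1}.

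For Part~1 the lower bound is immediate and uniform in $n$: by subadditivity of $\log c_\ell$ (Fekete) one has $\log c_\ell\ge \ell\,h(X)$, so
\[
\frac{\log\left|\mathcal{P}(\Delta_n,E\times X)\right|}{|\Delta_n|}=\frac{\sum_\ell N_\ell(n)\log c_\ell}{|\Delta_n|}\ge h(X)\,\frac{\sum_\ell \ell N_\ell(n)}{|\Delta_n|}=h(X),
\]
giving $h(E\times X)\ge h(X)$, and likewise $h(X\times E)\ge h(X)$. For strictness I would isolate the length-one chains: when $X$ is not a full shift one has $\log c_1>h(X)$, because $\log c_1=h(X)$ together with $c_\ell\le c_1^{\ell}$ would force $c_\ell=c_1^{\ell}$ for all $\ell$, i.e. $X$ full. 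Consequently
\[
h(E\times X)-h(X)\ge (\log c_1-h(X))\,\liminf_{n\to\infty}\frac{N_1(n)}{|\Delta_n|},
\]
so it suffices to produce a positive density of length-one chains. This is where $\gamma>1$ enters: it forces the boundary sphere to be macroscopic, $|\mathcal{T}_n|/|\Delta_n|\to 1-\gamma^{-1}>0$, and every boundary vertex reached by the \emph{free} generator is a chain-top whose constrained child lies at depth $n+1$ and is thus truncated to a length-one chain by $\Delta_n$. The main obstacle is making this uniform: one must show via the Perron--Frobenius eigenvector of $M$ that the free generator occurs with positive asymptotic frequency along $\mathcal{T}_n$ in \emph{both} axial directions, which is where the possible asymmetry of $M$ (e.g. the golden-mean matrix, where $M_{2,2}=0$ but $M_{1,1}=1$) has to be handled with care.

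For Part~2 I would exhibit the Markov--Cayley tree with adjacency matrix $M=\left[\begin{smallmatrix}1&1\\0&1\end{smallmatrix}\right]$, whose admissible words are exactly the $f_1^{a}f_2^{b}$; then $|\mathcal{T}_n|=n+1$ and $\gamma=1$. For $E\times X$ the maximal $f_2$-chains are the rays emanating from $f_1^{a}$, $0\le a\le n$, so $\Delta_n$ decomposes into exactly one chain of each length $1,2,\dots,n+1$, whence $\left|\mathcal{P}(\Delta_n,E\times X)\right|=\prod_{\ell=1}^{n+1}c_\ell$ and $|\Delta_n|=\sum_{\ell=1}^{n+1}\ell$. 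A Stolz--Cesàro argument then gives
\[
h(E\times X)=\lim_{n\to\infty}\frac{\sum_{\ell=1}^{n+1}\log c_\ell}{\sum_{\ell=1}^{n+1}\ell}=\lim_{\ell\to\infty}\frac{\log c_\ell}{\ell}=h(X),
\]
valid for every SFT $X=X_A$ with $A$ primitive, since then $\frac{\log c_\ell}{\ell}$ converges to $h(X)=\log\lambda_A$. This furnishes the required $\gamma=1$ example and mirrors the $\mathbb{N}^d$ phenomenon $h(E\otimes X)=h(X)$ of Theorem~\ref{thm 3.1}.
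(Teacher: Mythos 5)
Your Part~2 is correct and is essentially the paper's own proof: the same Markov--Cayley tree $\mathcal{G}_1$ with adjacency matrix $\left[\begin{smallmatrix}1&1\\0&1\end{smallmatrix}\right]$, and the same decomposition of $\Delta_n$ into exactly one constrained chain of each length $1,\dots,n+1$; replacing the paper's Perron--Frobenius bounds $c_1\lambda_A^i\le|A^i|\le c_2\lambda_A^i$ by Stolz--Ces\`aro is a harmless (in fact slightly more general) variant. Part~1, however, has a genuine gap, and it sits exactly where you yourself locate ``the main obstacle.'' Your strictness argument reduces to showing $\liminf_{n\to\infty}N_1(n)/|\Delta_n|>0$, and you propose to get length-one chains from boundary vertices of $\mathcal{T}_n$ whose last letter is the free generator. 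But the decisive step --- that the free generator occurs with positive asymptotic frequency as a last letter in $\mathcal{T}_n$, \emph{in both axial directions} --- is only announced, not proved: you defer it to a Perron--Frobenius analysis of $M$ ``handled with care.'' That analysis is never carried out, and it needs more than you acknowledge, since Perron--Frobenius requires irreducibility (indeed primitivity) of $M$, which is not a hypothesis of the theorem and would itself have to be deduced from $\gamma>1$ before the eigenvector argument can even begin. As written, the quantitative input that converts $\gamma>1$ into strict inequality is missing.

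The paper closes precisely this gap with a counting trick that bypasses Perron--Frobenius and the directional asymmetry altogether: count $2$-branching vertices instead of last letters. For a subtree of $\mathcal{T}_2$ with $\gamma>1$, the matrix $M$ can have no zero row (a zero row forces $|\mathcal{T}_n|$ to stay bounded, hence $\gamma=1$), so every vertex has one or two children, and therefore
\[
|\mathcal{T}_{n+1}|=|\mathcal{T}_n|\,(1+p_n),\qquad p_n:=\frac{|\{g\in\mathcal{T}_n:\,gf_1,gf_2\in\mathcal{T}_{n+1}\}|}{|\mathcal{T}_n|}\ \longrightarrow\ \gamma-1>0,
\]
with no structure theory at all. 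Each $2$-branching $g\in\mathcal{T}_{n-1}$ produces \emph{simultaneously} a length-one $f_2$-chain $\{gf_1\}$ (relevant to $E\times X$) and a length-one $f_1$-chain $\{gf_2\}$ (relevant to $X\times E$) inside $\Delta_n$, so the single quantity $p_{n-1}|\mathcal{T}_{n-1}|$ bounds $N_1(n)$ from below for both products at once. Combined with $|\mathcal{T}_{n-1}|/|\Delta_n|\to(\gamma-1)/\gamma^2>0$ --- the precise content of your ``macroscopic boundary'' assertion, which the paper actually proves by an elementary $\epsilon$-estimate on $|\mathcal{T}_n|/\sum_{i=1}^n|\mathcal{T}_i|$ --- this yields the positive density of length-one chains and hence both strict inequalities. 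If you want to keep your route, you must first show that $\gamma>1$ forces $M$ to be primitive and then prove the positive letter-frequency statement; the branching count is the cleaner repair.
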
	
\begin{proof}
\item[\bf 1.] Since $\gamma >1$ and $\mathcal{T}$ is a subtree of $\mathcal{T}_2$, the ratio of 2 branching $p_n=\frac{|\{g\in \mathcal{T}_n: gf_1,gf_2\in \mathcal{T}_{n+1} \}|}{|\mathcal{T}_n|}$ has a limit $p=\lim_{n\to\infty}p_n$ that is bounded away from zero. Due to the rule of $E\times X$, the $\Delta_n$ of $\mathcal{T}$ can be divided into $a_{n;i}$ many $\mathbb{Z}_i$, $1\leq i \leq n$ in which
\[
| \mathcal{P}(\Delta_n^\mathcal{T},E\times X)|=\prod_{i=1}^n |\mathcal{P}(\mathbb{Z}_i,X)|^{a_{n;i}},
\]
 where $a_{n;1}\geq |\{g\in\mathcal{T}_{n-1}: gf_1,gf_2\in\mathcal{T}_n\}|$. 

Thus, 
\begin{align*}
h(E\times X)&=\limsup_{n\to\infty} \frac{\log| \mathcal{P}(\Delta_n^\mathcal{T},E\times X)|}{|\Delta_n^\mathcal{T}|}\\
&=\limsup_{n\to\infty} \frac{\log\prod_{i=1}^n |\mathcal{P}(\mathbb{Z}_i,X)|^{a_{n;i}}}{|\Delta_n^\mathcal{T}|}\\
&=\limsup_{n\to\infty} \sum_{i=1}^n\frac{ \log |\mathcal{P}(\mathbb{Z}_i,X)|^{a_{n;i}}}{|\Delta_n^\mathcal{T}|}.
\end{align*} 
Since $\log |\mathcal{P}(\mathbb{Z}_n,X)|\geq nh(X)$ for all $n\geq 1$, we have
\begin{align*}
h(E\times X)\geq\limsup_{n\to\infty} \sum_{i=1}^n\frac{ a_{n;i} nh(X)}{|\Delta_n^\mathcal{T}|}=h(X).
\end{align*} 

Since $X$ is not a full shift, it is clear that $\log|\mathcal{P}(\mathbb{Z}_1,X)|>h(X)$. Then, it is enought to show that $\lim_{n\to\infty} \frac{a_{n;1}}{|\Delta_n^\mathcal{T}|}>0$ for proving $h(E\times X)>h(X)$. Indeed, it is equivalent to show the following limit exists and positive. We now claim
 \[
\lim_{n\to\infty}\frac{|\mathcal{T}_n|}{\sum_{i=1}^n |\mathcal{T}_i|}p_n=\frac{\gamma-1}{\gamma}p.
\]
For any $0<\epsilon<\frac{1}{2\gamma}$, since $\frac{1}{\gamma}=\lim_{n\to\infty}\frac{|\mathcal{T}_{n}|}{|\mathcal{T}_{n+1}|}$, there is an $m\in\mathbb{N}$ such that $\frac{1}{\gamma}-\epsilon<\frac{|\mathcal{T}_n|}{|\mathcal{T}_{n+1}|}<\frac{1}{\gamma}+\epsilon$ for all $n\geq m$. Then for $n\geq m$, we have    
\begin{align*}
&\frac{1}{m(\frac{1}{\gamma}+\epsilon)^{n-m}+\frac{\gamma}{\gamma-1-\gamma\epsilon}}\leq \frac{1}{m\frac{|\mathcal{T}_m|}{|\mathcal{T}_n|}+\sum_{i=0}^{\infty}(\frac{1}{\gamma}+\epsilon)^i}\\
&\leq
\frac{|\mathcal{T}_n|}{\sum_{i=1}^n |\mathcal{T}_i|}=\frac{1}{\sum_{i=1}^{m-1}\frac{|\mathcal{T}_i|}{|\mathcal{T}_n|}+\sum_{i=m}^n\frac{|\mathcal{T}_i|}{|\mathcal{T}_n|}}\\
&\leq \frac{1}{\sum_{i=1}^{m-1}\frac{|\mathcal{T}_i|}{|\mathcal{T}_n|}+\sum_{i=0}^{n-m}(\frac{1}{\gamma}-\epsilon)^i}\leq \frac{1}{\sum_{i=0}^{n-m}(\frac{1}{\gamma}-\epsilon)^i}.
\end{align*}
Taking $n\to\infty$, we have
\[
\frac{\gamma-1-\gamma\epsilon}{\gamma} \leq\lim_{n\to\infty}\frac{|\mathcal{T}_n|}{\sum_{i=1}^n |\mathcal{T}_i|}\leq \frac{\gamma-1+\gamma\epsilon}{\gamma}.
\]
Since $\epsilon$ is arbitrary, we obtain $\lim_{n\to\infty}\frac{|\mathcal{T}_n|}{\sum_{i=1}^n |\mathcal{T}_i|}= \frac{\gamma-1}{\gamma}$. The proof of claim is complete.

Therefore, we have $h(E\times X)> h(X)$. The proof of $h(X\times E)> h(X)$ is similar and thus we omit it.
\item[\bf 2.] Let $\mathcal{G}_1$ be a Markov-Cayley tree with the adjacency matrix $M=\left[\begin{matrix} 1&1\\0&1\end{matrix}\right]$. Due to the rule of $E\times X$, for $n\geq 2$, the $\Delta_n^{\mathcal{G}_1}$ can be divided into a $\mathbb{Z}_1$, a $\mathbb{Z}_2$, ... and a $\mathbb{Z}_{n+1}$ in which
\[
\left|\mathcal{P}\left(\Delta^{\mathcal{G}_1}_{n},E\times X\right)\right|=\prod_{i=1}^{n+1}\left|\mathcal{P}\left(\mathbb{Z}_i,X\right)\right|.
\]
 See Figure \ref{partitionM} for the partitions of $\Delta_1^{\mathcal{G}_1}$, $\Delta_2^{\mathcal{G}_1}$ and $\Delta_3^{\mathcal{G}_1}$ based on $E\times X$. 
\begin{figure} 
	\centering 
	\includegraphics[width=1\textwidth]{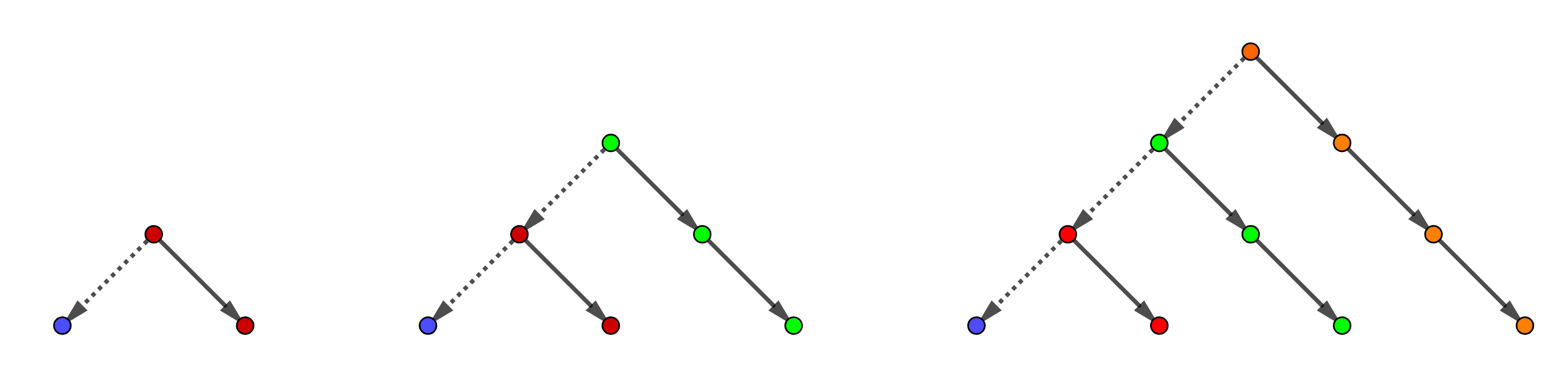} 
	\caption{The partitions of $\Delta_1^{\mathcal{G}_1}$, $\Delta_2^{\mathcal{G}_1}$ and $\Delta_3^{\mathcal{G}_1}$ based on $E\times X$.} 
	\label{partitionM} 
\end{figure}

Thus, we have the entropy of $E\times X$ on $\mathcal{G}_1$ is equal to 
\begin{align*}
	h(E\times X)&=\lim_{n\to \infty}\frac{\log\left|\mathcal{P}\left(\Delta^{\mathcal{G}_1}_{n},E\times X\right)\right|}{|\Delta^{\mathcal{G}_1}_n|}=\lim_{n\to \infty}\frac{\log \prod_{i=1}^{n+1}\left|\mathcal{P}\left(\mathbb{Z}_i,X\right)\right|}{1+2+\cdots +(n+1)}=\lim_{n\to \infty}\frac{ \sum_{i=1}^{n+1}\log\left|A^i\right|}{\frac{(n+1)(n+2)}{2}}.	
\end{align*}	
Assume $A$ is a primitive matrix and let $X=X_A$, there are constants $c_1$ and $c_2$ such that $c_1\lambda_A^i \leq |A^i| \leq c_2 \lambda_A^i$ for all $i\geq 1$. Then we have
\begin{align*}
	\log\lambda_A=\lim_{n\to \infty}\frac{ \sum_{i=1}^{n+1}\log c_1\lambda_A^i}{\frac{(n+1)(n+2)}{2}}\leq h(E\times X)&\leq \lim_{n\to \infty}\frac{ \sum_{i=1}^{n+1}\log c_2\lambda_A^i}{\frac{(n+1)(n+2)}{2}}=\log\lambda_A.	
\end{align*}
The proof is complete by $h(X)=\log \lambda_A$.
\end{proof}

\bibliographystyle{amsplain}
\bibliography{ban}

\end{document}